\theoremstyle{plain}
\newtheorem{thm}{Theorem}[section]
\newtheorem{prop}[thm]{Proposition}
\newtheorem{lemma}[thm]{Lemma}
\theoremstyle{definition}
\newtheorem{defn}[thm]{Definition}
\newtheorem*{defn*}{Definition}
\newtheorem{example}[thm]{Example}
\newtheorem{construction}[thm]{Construction}
\theoremstyle{remark}
\newtheorem{rmk}[thm]{Remark}
\newtheorem*{rmk*}{Remark}
\newcommand{\field}[1]{\mathbbm{#1}}
\newcommand{\N}{\field{N}}
\newcommand{\ideal}[1]{\mathfrak{#1}}
\newcommand{\m}{\ideal{m}}
\newcommand{\n}{\ideal{n}}
\newcommand{\p}{\ideal{p}}
\newcommand{\q}{\ideal{q}}
\newcommand{\ia}{\ideal{a}}
\newcommand{\ib}{\ideal{b}}
\newcommand{\ie}{\ideal{e}}
\newcommand{\ih}{\ideal{h}}
\newcommand{\ic}{\ideal{c}}
\newcommand{\func}[1]{\mathrm{#1} \,}
\newcommand{\Spec}{\func{Spec}}
\newcommand{\height}{\func{ht}}
\DeclareMathOperator{\Ass}{Ass}
\newcommand{\im}{\func{im}}
\newcommand{\ra}{\rightarrow}
\newcommand{\eHK}{e_{\rm HK}}
\newcommand{\qjj}{f}
\newcommand{\qjjm}{f^-}
\newcommand{\qjjp}{f^+}
\newcommand{\rjj}[3]{u_{#1}({#2}, {#3})}
\newcommand{\sjj}[3]{v_{#1}({#2}, {#3})}
\newcommand{\rjjm}[3]{u^-_{#1}({#2}, {#3})}
\newcommand{\sjjm}[3]{v^-_{#1}({#2}, {#3})}
\newcommand{\tjjm}[3]{w^-_{#1}({#2}, {#3})}
\newcommand{\ujjm}[3]{x^-_{#1}({#2}, {#3})}
\newcommand{\rjjp}[3]{u^+_{#1}({#2}, {#3})}
\newcommand{\sjjp}[3]{v^+_{#1}({#2}, {#3})}
\newcommand{\tjjp}[3]{w^+_{#1}({#2}, {#3})}
\newcommand{\vjjm}[3]{y^-_{#1}({#2}, {#3})}
\newcommand{\vjjp}[3]{y^+_{#1}({#2}, {#3})}
\DeclareMathOperator{\len}{\lambda}
\DeclareMathOperator{\ann}{ann}
\DeclareMathOperator{\Hom}{Hom}
\DeclareMathOperator{\Min}{Min}
\newcommand{\cM}{\mathcal{M}}
\newcommand{\cH}{\mathcal{H}}
\DeclareMathOperator{\lt}{\mathtt{lt}}
\DeclareMathOperator{\lm}{\mathtt{lm}}
\DeclareMathOperator{\lcm}{\mathtt{lcm}}
\author{Neil Epstein}
\address{Universit\"at Osnabr\"uck \\ 
Institut f\"ur Mathematik \\ 
49069 Osnabr\"uck \\ Germany}
 \email{nepstein@uni-osnabrueck.de}
\author{Yongwei Yao}
\address{
Department of Math and Statistics\\
Georgia State University\\
30 Pryor St., Atlanta, GA 30303}
\email{yyao@gsu.edu}
\thanks{The second author was partially supported by the National Science Foundation DMS-0700554}
\title{Some extensions of Hilbert-Kunz multiplicity}
\date{\today}
\begin{document}

\begin{abstract}
Let $R$ be an excellent Noetherian ring of prime characteristic.  Consider an arbitrary nested pair of ideals (or more generally, a nested pair of submodules of a fixed finite module).  We do \emph{not} assume that their quotient has finite length.  In this paper, we develop various sufficient numerical criteria for when the tight closures of these ideals (or submodules) match.  For some of the criteria we only prove sufficiency, while some are shown to be equivalent to the tight closures matching.  We compare the various numerical measures (in some cases demonstrating that the different measures give truly different numerical results) and explore special cases where equivalence with matching tight closure can be shown.  All of our measures derive ultimately from Hilbert-Kunz multiplicity.
\end{abstract}

\subjclass[2010]{Primary 13A35; Secondary 13D40}
\keywords{Hilbert-Kunz multiplicity, tight closure, j-multiplicity}

\maketitle

\section{Introduction}
The classical notions of the Hilbert-Samuel function and multiplicity of a finite colength ideal have far-reaching implications in commutative algebra.  They arose in (and have many strong links to) intersection theory, and the multiplicity may be used to characterize when a pair $J \subseteq I$ of ideals have the same integral closure.  In characteristic $p$ algebra, these have natural analogues, namely the Hilbert-Kunz function and Hilbert-Kunz multiplicity of a finite colength ideal.  Both multiplicities may be extended somewhat to modules and to relative situations.  The two share many properties with each other.  It is notable that Hilbert-Kunz multiplicity characterizes when a pair $J \subseteq I$ of ideals have the same \emph{tight} closure, and that the Hilbert-Kunz function was recently used \cite{BM-unloc} to show that tight closure does not commute with localization, giving a negative answer to a very important question.  It should also be noted that the Hilbert-Kunz function is also linked to intersection theory (\emph{e.g.}, \cite{Kur-sRR}).

Achilles and Manaresi \cite{AchMan-jmult, AchMan-bimult} defined j-multiplicity of arbitrary ideals, extending the definition of Hilbert-Samuel multiplicity, in the context of the St\"uckrad-Vogel intersection algorithm in intersection theory (cf. the book \cite{FOV}).  Flenner and Manaresi \cite{FleMan-jmult} later showed that j-multiplicity may be used (through localizations) to characterize when an arbitrary pair $J \subseteq I$ have the same integral closure.

In this article, we explore a variety of techniques, all of which extend Hilbert-Kunz multiplicity and most of which involve 0th local cohomology, designed to provide criteria for when a nested pair of arbitrary ideals shares the same tight closure.

One possible approach would be: for each ideal, define a limit (or at least a finite limsup) based on the definition of j-multiplicity but using bracket powers in place of ordinary powers of ideals.  Such an approach would require that the numbers $\lambda_R(H^0_\m(R/I^{[q]})) / q^d$ (where $q$ varies over powers of $p$) be bounded above by a constant.  Such a result has proved elusive even when $R$ is essentially of finite type over a field and $R/I$ has small dimension (cf. \cite[Corollary 5.2]{Ab-QGor} for a solution to the already difficult dimension 1 case).  Thus, we limit ourselves here to \emph{relative} measures for a nested pair of ideals or submodules, where vanishing will be the benchmark for expecting tight closures to coincide.

For a Noetherian local ring $(R,\m)$, Monsky \cite{Mon-HK} defined the Hilbert-Kunz multiplicity of an $\m$-primary ideal $I$ via: \[
\eHK(I) := \lim_{q \ra \infty} \frac{\len(R/I^{[q]})}{q^d},
\]
where $d = \dim R$.  He showed that this is always well-defined, finite, and $\geq 1$ for any $\m$-primary ideal.

As noted above, Hilbert-Kunz multiplicity characterizes when a nested pair of $\m$-primary ideals has the same tight closure.  In fact, more is true (due to Hochster and Huneke): 
\begin{thm}\label{thm:HHlen}\cite[Theorem 8.17]{HHmain}
Let $(R,\m)$ be a Noetherian local ring of dimension $d$, and let $L \subseteq M \subseteq N$ be finitely generated $R$-modules such that $\len(M/L)<\infty$.  \begin{enumerate}[label=(\alph*)]
\item If $M \subseteq L^*_N$, then $\len(M^{[q]}_N / L^{[q]}_N) \leq C q^{d-1}$ for all $q$, for some constant $C$ independent of $q$.  Hence,
\[
\limsup_{q \ra \infty}  \frac{\len(M^{[q]}_N / L^{[q]}_N)}{q^d} = 0.
\]
\item Suppose that $R$ has a completely stable weak test element $c$, and that $\hat{R}$ is equidimensional and reduced\footnote{We remark here that by the methods used in proving our Theorem~\ref{thm:zerotc}, the assumption that $\hat{R}$ is reduced is unnecessary.}.  If \[
\liminf_{q \ra \infty}  \frac{\len(M^{[q]}_N / L^{[q]}_N)}{q^d} = 0,
\] then $M \subseteq L^*_N$.
\end{enumerate}
\end{thm}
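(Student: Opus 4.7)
For part (a), my plan is to exploit a completely stable weak test element $c$ (which exists because $R$ is excellent). Once $M \subseteq L^*_N$, tight closure gives $c \in \ann_R(M^{[q]}_N/L^{[q]}_N)$ for all $q$ sufficiently large. Writing $M = L + R x_1 + \cdots + R x_s$ and choosing $n$ with $\m^n M \subseteq L$, I get $(\m^{[q]})^n M^{[q]}_N = (\m^n M)^{[q]}_N \subseteq L^{[q]}_N$, so $M^{[q]}_N/L^{[q]}_N$ is also annihilated by $(\m^{[q]})^n$. Thus it is a module over $\bar R/(\bar\m^{[q]})^n$, where $\bar R := R/(c)$, generated by $s$ elements. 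Arranging $c$ to avoid all minimal primes of $R$ forces $\dim \bar R \leq d-1$, so $\len(\bar R/\bar\m^{[q]}) = O(q^{d-1})$ by Hilbert-Kunz in dimension $d-1$. A standard filtration of $\bar R/(\bar\m^{[q]})^n$ by the powers $(\bar\m^{[q]})^i/(\bar\m^{[q]})^{i+1}$, each generated over $\bar R/\bar\m^{[q]}$ by a number of elements bounded independently of $q$, upgrades this to $\len(\bar R/(\bar\m^{[q]})^n) = O(q^{d-1})$, giving the claim.

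For part (b), I argue by contrapositive: assuming $M \not\subseteq L^*_N$, I produce $\alpha > 0$ with $\len(M^{[q]}_N/L^{[q]}_N) \geq \alpha q^d$ for infinitely many $q$. Pick $x \in M \setminus L^*_N$; replacing $M$ by $L + Rx$ only decreases the length quotient, so it suffices to treat this case. The quotient $M^{[q]}_N/L^{[q]}_N$ is then cyclic, isomorphic to $R/I_q$ where $I_q$ is the annihilator of the image of $x$ in $M^{[q]}_N/L^{[q]}_N$. Since $c$ is a completely stable weak test element and $x \notin L^*_N$, $c \notin I_q$ for infinitely many $q$.

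Next, I pass to the completion $\hat R$ (lengths, bracket powers, and tight-closure non-membership are preserved). By hypothesis $\hat R$ is equidimensional and reduced, with minimal primes $P_1, \ldots, P_t$ all of dimension $d$; by complete stability the image of $c$ is a test element in each complete local domain $\bar R_i := \hat R/P_i$. The equidimensional-reduced hypothesis now enters essentially through a ``reduction to minimal primes'' argument: using $\bigcap_i P_i = 0$ and the persistence of tight closure under quotients, one shows that $x \notin L^*_N$ forces $\bar x \notin (\bar L)^*_{\bar N}$ in at least one $\bar R_i$.

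Finally, in the complete local domain $\bar R_i$ with test element, I invoke the classical Hochster-Huneke criterion that tight closure is detected by Hilbert-Kunz multiplicity: $\bar x \notin (\bar L)^*_{\bar N}$ yields a strictly positive limit $\lim_q \len(\bar R_i/\bar I_q)/q^d = \alpha > 0$. Via the surjections $R/I_q \twoheadrightarrow \hat R/I_q\hat R \twoheadrightarrow \bar R_i/\bar I_q$, one gets $\len(R/I_q) \geq \alpha q^d$ for $q \gg 0$, contradicting $\liminf = 0$. The principal obstacle is this last invocation, namely the domain-case positivity of Hilbert-Kunz multiplicity when the element lies outside tight closure; the minimal-prime reduction of the previous paragraph, while technical, is essentially bookkeeping once the equidimensional-reduced structure of $\hat R$ is in place.
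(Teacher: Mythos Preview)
The paper does not supply its own proof of this statement; Theorem~\ref{thm:HHlen} is quoted from \cite[Theorem~8.17]{HHmain} and used as a black box. So there is no proof in the paper to compare against directly, though the proof of Theorem~\ref{thm:zerotc} does reveal what the decisive step of the Hochster--Huneke argument is.

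For part~(a), your outline is essentially the standard one, with one correctable slip: the hypotheses of~(a) do not include excellence, so you cannot simply summon a completely stable weak test element. Nor do you need one. Since $M = L + Rx_1 + \cdots + Rx_s$ with each $x_i \in L^*_N$, the \emph{definition} of tight closure provides $c_i \in R^\circ$ with $c_i (x_i)^q_N \in L^{[q]}_N$ for $q \gg 0$; take $c = c_1 \cdots c_s \in R^\circ$ and your argument goes through unchanged. The rest (annihilation by $c$ and $(\m^{[q]})^n$, passage to $\bar R = R/cR$ of dimension $\leq d-1$, and the filtration bound on $\len(\bar R/(\bar\m^{[q]})^n)$) is correct.

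For part~(b), the reductions you describe---complete, then pass to a minimal prime so as to work in a complete local domain of dimension $d$---are correct and match the Hochster--Huneke strategy. But your final step is circular: the ``classical Hochster--Huneke criterion'' you invoke in the domain case \emph{is} Theorem~8.17, the very result under discussion. You flag this as the principal obstacle, and rightly so: it is the entire content of the proof. What is actually done (and what the present paper explicitly extracts in the proof of Theorem~\ref{thm:zerotc}) is this: in the complete local domain one uses Cohen's structure theorem to realize it as a module-finite extension of a regular local ring $(A,\n)$ of dimension $d$, and one proves that $x \notin L^*_N$ forces
\[
L^{[q]}_N :_A x^q_N \subseteq \n^{[q/q_1]}
\]
for some fixed $q_1$ and \emph{all} $q \gg 0$. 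This yields $\len(R/I_q) \geq \len(A/\n^{[q/q_1]}) = (q/q_1)^d$, hence $\liminf \geq q_1^{-d} > 0$. Note too that you need the lower bound for all large $q$, as in your last paragraph, not merely for infinitely many $q$ as you write earlier; the latter would only contradict a vanishing $\limsup$.
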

If one sets $N := R$ and $J \subseteq I$ are $\m$-primary ideals, then this theorem gives the result on Hilbert-Kunz multiplicities as a special case.

In Section~\ref{sec:rel}, we introduce the limit $\rjj NLM$ for an arbitrary triple $L \subseteq M \subseteq N$ of finite $R$-modules where $(R,\m)$ is local (as well as the associated limsup $\rjjp NLM$ and liminf $\rjjm NLM$), based partly on the ideas of Theorem~\ref{thm:HHlen} and partly on j-multiplicity.  In Theorem~\ref{thm:zerotc}, we show that vanishing of the liminf version gives a one-way implication for $M \subseteq L^*_N$.

In Section~\ref{sec:examples}, we exhibit several situations where a strong converse holds.  Section~\ref{sec:gb} is devoted to one such example, where we resort to using Gr\"obner bases as an aid to computation.

In Section~\ref{sec:var}, we give several variants of the notion from Section~\ref{sec:rel}, and we show in Theorem~\ref{thm:var} that in cases where tight closure commutes with localization for $L \subseteq N$, a certain numerical vanishing condition is \emph{equivalent} to $M \subseteq L^*_N$.  We then show in Proposition~\ref{prop:reg} that two of these notions of relative Hilbert-Kunz multiplicity are, in general, quite distinct.

In Section~\ref{sec:Nak*}, we provide a numerical criterion, based on a tight closure variant of the Nakayama lemma previously proved by the first named author, which determines exactly when a pair of nested submodules have the same tight closure.  However, this criterion does not involve local cohomology, and so is further removed from j-multiplicity-type notions.

Finally in Section~\ref{sec:jHK}, we introduce a notion that looks more closely related to j-multiplicity than any of our other definitions.  In Theorem~\ref{thm:jHKcrit}, we show that it gives another criterion that determines exactly when two nested ideals $J \subseteq I$ with $\len(I/J)<\infty$ have the same tight closure.  Theorem~\ref{thm:jHKglobal} is a global version of this for rings which are F-regular on the punctured spectrum.

To conclude this introductory section, we recall some standard definitions and fix some notational conventions:

All rings are Noetherian and have prime characteristic $p>0$.  For a nonnegative integer $e$, denote $q :=p^e$.  All $R$-modules are considered as left modules unless noted otherwise.  For an $R$-module $M$, let ${}^eM$ be the ($R$-$R$)-bimodule which equals $M$ as an abelian group, and whose bimodule structure is given by $r \cdot z \cdot s := r^{p^e}sz$ for $z\in {}^eM$ and $r,s \in R$.  For an $R$-module $M$, $F^e(M)$ denotes the \emph{right} $R$-module structure on $M \otimes_R{}^eR$, but considered as a \emph{left} $R$-module.  Hence, ${}^e(F^e(M)) \cong M \otimes_R {}^eR$ as left $R$-modules.

Let $L \subseteq M$ be $R$-modules.  For $z\in M$, $z^q_M$ denotes the image $z \otimes{}^e1$ of $z$ under the map $M \ra F^e(M) = M \otimes_R{}^eR$.  Similarly, $L^{[q]}_M$ is the image of the map $F^e(L) \ra F^e(M)$ which is induced by the inclusion $L \hookrightarrow M$.  The \emph{tight closure} of $L$ in $M$, denoted $L^*_M$, is the submodule of $M$ consisting of all elements $z\in M$ such that there exists an element $c\in R^\circ$ (\emph{i.e.}, not in any minimal prime of $R$), possibly dependent on $z$, such that for all $q\gg 0$, we have $c z^q_M \in L^{[q]}_M$.  If $M=R$, we omit the subscript.  A ring $R$ is \emph{weakly F-regular} if all ideals are tightly closed (as $R$-submodules of $R$.)  $R$ is \emph{F-regular} if $R_\p$ is weakly F-regular for all $\p \in \Spec R$.  An element $c\in R^\circ$ is a ($q_0$-)\emph{weak test element} if there is some power $q_0$ of $p$ such that for all ideals $I$ and all $x\in R$, $x\in I^*$ if and only if $c x^q \in I^{[q]}$ for all $q\geq q_0$.  If we can take $q_0=1$, we say $c$ is a \emph{test element}.  For a more thorough review of these concepts, we recommend the seminal paper \cite{HHmain} and the monograph \cite{HuTC}.

\begin{rmk}
In many places in this manuscript, the reader will encounter the phrase: ``let $L \subseteq M \subseteq N$ be finite(ly generated) $R$-modules.''  We state things this way for convenience, and because in many situations what one is interested in is the case where $N$ is a ring and $L$, $M$ are ideals.  However, \emph{wherever this phrase occurs}, note that it is not necessary to assume that \emph{any} of the modules in question are finite, but only that the quotient module $N/L$ is finite.  A reader who is interested in possibly infinite modules should keep this in mind.
\end{rmk}

\section{Relative multiplicity}\label{sec:rel}
Throughout, let $R$ be a Noetherian ring of prime characteristic $p>0$.

When $(R,\m)$ is local and $M$ an $R$-module, we let \[
\Gamma_\m(M) := \{ z \in M \mid \m^n z = 0 \text{ for some } n \in \N \}.
\]

Recall that $\Gamma_\m$ is a left-exact functor, and that a finitely generated $R$-module $M$ has finite length if and only if $M=\Gamma_\m(M)$.  We start with the following definition:

\begin{defn}\label{def:rjj}
Let $L \subseteq M \subseteq N$ be $R$-modules, where $(R,\m)$ is local of dimension $d$.  Then the \emph{relative multiplicity of $L$ against $M$ (in $N$)} is \[
\rjjp N L M := \limsup_{q \ra \infty} \frac{\len(\Gamma_\m(M^{[q]}_N / L^{[q]}_N))}{q^d}.
\]
(resp. \[
\rjjm N L M := \liminf_{q \ra \infty} \frac{\len(\Gamma_\m(M^{[q]}_N / L^{[q]}_N))}{q^d}.)
\]
If these are equal (\emph{i.e.}, the limit is well-defined), then the common number is written $\rjj NLM$.
\end{defn}
When $N$ is understood (\emph{e.g.}, when $N=R$), we omit it from the notation.

Hence, if $\len(M/L)<\infty$ and $N/L$ is finite, then Theorem~\ref{thm:HHlen} shows that under the conditions of that theorem, $M \subseteq L^*_N$ if and only if $\rjjp NLM = 0$ if and only if $\rjjm NLM=0$. (The assumption in Theorem~\ref{thm:HHlen} that $L \subseteq M \subseteq N$ are finitely generated is unnecessary.)

At this point, the reader may wonder the following: if $J \subseteq I$ are ideals with the same tight closure, and the ring is reasonable enough, can it happen that their quotient has infinite length?  In fact it can, as the following example shows, in which we ``add a variable'':

\begin{example}\label{ex:addvar}
Let $(A,\n)$ be \emph{any} Noetherian local ring that is not weakly F-regular, and let $\ib \subsetneq \ia$ be a pair of nested distinct ideals that have the same tight closure.  Let $z \in \ia \setminus \ib$.  Let $X$ be an indeterminate over $A$, let $Q := A[X]$, and let $R := Q_{\n Q + XQ}$.  Let $\m := \n R + X R$ be the unique maximal ideal of $R$.  Let $J := \ib R$ and $I := \ia R$.  Then $J^* = I^*$.  We have:
\[
\frac{I}{J} \supseteq \frac{J + Rz}{J} \cong \frac{Rz}{J \cap Rz} \cong \frac{R}{J :_R z}.
\]
Note that for all integers $n\geq 0$, we have $X^n z \notin J$.  Hence $X \notin \sqrt{(J :_R z)}$, which shows that $(J :_R z)$ is a non-$\m$-primary ideal.  By the containments displayed above, then, we have
\[
\len_R(I/J) \geq \len_R(R/(J :_R z)) = \infty.
\]
\end{example}

In the following theorem, we extend one of the implications of Theorem~\ref{thm:HHlen} to the infinite length case.  First, recall the following definition from \cite{HHexponent}, here generalized to the module case (see \cite[p. 4850]{nmepdep} for further explanation):

\begin{defn}
Let $L$ be a submodule of $N$ and $z\in N$.  Then $\q \in \Spec R$ is a \emph{stable prime associated to $L \subseteq N$ and $z$} if $z \notin (L_\q)^*_{N_\q}$, but for all primes $\p \subsetneq \q$, $z \in (L_\p)^*_{N_\p}$.  The set of all such primes is denoted $T^N_L(z)$, and we set \[
T^N_L := \bigcup_{z \in N} T^N_L(z).
\]
\end{defn}

\begin{thm}\label{thm:zerotc}
Let $R$ be a Noetherian ring, and let $L \subseteq M \subseteq N$ be $R$-modules such that $N/L$ is finitely generated.  Suppose that $R$ contains a completely stable weak test element $c$, and that $\widehat{R_\p}$ is equidimensional for all $\p \in T^N_L$.  If $M \nsubseteq L^*_N$, then for any $x \in M \setminus L^*_N$, we have $\rjjm{N_\p}{L_\p}{M_\p}>0$ for all $\p \in T^N_L(x)$.  (Hence if $\rjjm{N_\p}{L_\p}{M_\p}=0$ for all $\p \in T^N_L$, then $M \subseteq L^*_N$.)
\end{thm}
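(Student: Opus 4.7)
\emph{Plan.} The strategy is to adapt Hochster--Huneke's proof of Theorem~\ref{thm:HHlen}(b) by a Frobenius twist plus a $\Gamma_\m$-saturation; the essential new input is that the stable-prime hypothesis $x \in (L_\q)^*_{N_\q}$ for $\q \subsetneq \p$ automatically places the obstructive elements inside $\Gamma_\m(M^{[q]}_N / L^{[q]}_N)$.

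\emph{Localization and $\m$-torsion.} Fix $x\in M\setminus L^*_N$ and $\p\in T^N_L(x)$, and replace $R,L,M,N$ by their localizations at $\p$. All hypotheses survive, so we may assume $\p=\m$, $(R,\m)$ is local of dimension $d$, $\widehat R$ is equidimensional, $c$ is a completely stable $q_0$-weak test element, $x\notin L^*_N$, and $x\in(L_\q)^*_{N_\q}$ for every $\q\neq\m$. For each $q\geq q_0$ and each prime $\q\neq\m$, the complete stability of $c$ keeps it a $q_0$-weak test element in $R_\q$, so $x\in(L_\q)^*_{N_\q}$ yields $cx^q\in(L^{[q]}_N)_\q$; equivalently there exists $s\notin\q$ with $scx^q\in L^{[q]}_N$, so $K_q:=L^{[q]}_N:_R cx^q$ is not contained in $\q$. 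Varying $\q$, $K_q$ is $\m$-primary, and the cyclic submodule $R\cdot(cx^q+L^{[q]}_N)\cong R/K_q$ of $M^{[q]}_N/L^{[q]}_N$ is $\m$-power-torsion. Hence $R/K_q\hookrightarrow\Gamma_\m(M^{[q]}_N/L^{[q]}_N)$ for every $q\geq q_0$.

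\emph{Finite-length reduction.} Since $x\notin L^*_N$, pick $q'\geq q_0$ with $cx^{q'}\notin L^{[q']}_N$. Let $J_{q'}:=L^{[q']}_N:_R x^{q'}$ and $\bar J:=J_{q'}:_R\m^\infty$. Because $c\in\bar J\setminus J_{q'}$, we have $\m\in\Ass(R/J_{q'})$, and so there exists a socle representative $r\in\bar J$ with $r\m\subseteq J_{q'}$ but $r\notin J_{q'}$. Set $\bar L:=L^{[q']}_N$, $\bar M:=L^{[q']}_N+R\cdot rx^{q'}$, $\bar N:=N^{[q']}$, and $\bar x:=rx^{q'}$. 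The computation of the first paragraph, now applied at level $q=q'$ to the element $rx^{q'}$ in place of $cx^{q'}$ (using $r\in\bar J$ to get $r/1\in J_{q'}R_\q=L^{[q']}_N\cdot R_\q:_{R_\q}x^{q'}$ for every $\q\neq\m$), shows that $\bar M/\bar L\cong R/(L^{[q']}_N:_R rx^{q'})$ has finite length and that, for every $\bar q$, the twisted quotient $\bar M^{[\bar q]}_{\bar N}/\bar L^{[\bar q]}_{\bar N}\cong R/(L^{[q'\bar q]}_N:_R r^{\bar q}x^{q'\bar q})$ embeds into $\Gamma_\m(M^{[q'\bar q]}_N/L^{[q'\bar q]}_N)$. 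The delicate technical step, which is the crux of the whole argument, is to verify $\bar x\notin\bar L^*_{\bar N}$. Here one uses the equivalence $x\notin L^*_N \iff x^{q'}\notin(L^{[q']}_N)^*_{N^{[q']}}$ (a direct consequence of the weak test element property together with the cofinality of $\{q'\bar q:\bar q\text{ a power of }p\}$ in powers of $p$), and then a Frobenius-iteration argument that exploits the socle relation $r\m\subseteq J_{q'}$ to transfer this non-membership from $x^{q'}$ to $rx^{q'}$. This refinement, replacing the naive candidate $cx^{q'}$ (which a priori may lie in $\bar L^*_{\bar N}$, since $cx^{q'}\in\bar L^*_{\bar N}$ only requires the weaker $c^{Q+1}x^{q'Q}\in L^{[q'Q]}_N$ for $Q\geq q_0$), is also what permits the reducedness hypothesis of Theorem~\ref{thm:HHlen}(b) to be discarded, as flagged in the footnote.

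\emph{Conclusion.} Once $\bar x\notin\bar L^*_{\bar N}$ is secured, the (reducedness-free variant of) Theorem~\ref{thm:HHlen}(b) applied to $(\bar L,\bar M,\bar N)$ yields constants $\bar C>0$ and $\bar q_1$ with
\[
\len_R\bigl(\bar M^{[\bar q]}_{\bar N}/\bar L^{[\bar q]}_{\bar N}\bigr)\;\geq\;\bar C\bar q^d \quad\text{for all }\bar q\geq\bar q_1.
\]
Composing with the embedding into $\Gamma_\m(M^{[q'\bar q]}_N/L^{[q'\bar q]}_N)$ and writing $q:=q'\bar q$, we obtain $\len_R(\Gamma_\m(M^{[q]}_N/L^{[q]}_N))\geq(\bar C/(q')^d)q^d$ for every $q\geq q'\bar q_1$ of the form $q'\bar q$; since these values of $q$ exhaust all powers of $p$ at least $q'\bar q_1$, we conclude $\rjjm{N_\p}{L_\p}{M_\p}\geq\bar C/(q')^d>0$, as required. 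The main obstacle, as emphasized above, is Step 2: producing $\bar x\in\bar M$ with $\bar x\notin\bar L^*_{\bar N}$.
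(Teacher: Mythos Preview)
Your proposal has a genuine gap at what you yourself flag as the ``main obstacle'': showing $\bar x = rx^{q'} \notin (L^{[q']}_N)^*_{F^{e'}(N)}$. The socle relation $r\m \subseteq J_{q'}$ controls $r$ modulo $J_{q'}$, but gives no leverage on $r^{\bar q}$ modulo $L^{[q'\bar q]}_N :_R x^{q'\bar q}$ for $\bar q > 1$, which is what tight-closure membership of $rx^{q'}$ actually concerns. Concretely: identifying $R/J_{q'}$ with the cyclic submodule of $F^{e'}(N)/L^{[q']}_N$ generated by $x^{q'}$, the intersection $0^*_{F^{e'}(N)/L^{[q']}_N} \cap (R/J_{q'})$ is only known to be a \emph{proper} submodule of $R/J_{q'}$ (since $1+J_{q'}$ lies outside it); nothing forces it to miss the socle of $\bar J/J_{q'}$. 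For instance, nothing you have written prevents $r$ from lying in a minimal prime of $R$, in which case $r^{\bar q}$ may vanish in the relevant quotient for all large $\bar q$ and $rx^{q'} \in (L^{[q']}_N)^*$ trivially. The appeal to ``a Frobenius-iteration argument'' is not an argument. There is also a circularity: you invoke a ``reducedness-free variant of Theorem~\ref{thm:HHlen}(b)'' as a black box, but the only route the paper offers for dropping reducedness is the proof of Theorem~\ref{thm:zerotc} itself.

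The paper's proof sidesteps both problems by \emph{not} reducing to Theorem~\ref{thm:HHlen}(b). Your Step~2 is correct and matches the paper: one has $R/K_q \hookrightarrow \Gamma_\m(M^{[q]}_N/L^{[q]}_N)$ for $K_q = L^{[q]}_N :_R cx^q$. From there, rather than manufacturing a new triple $(\bar L,\bar M,\bar N)$, the paper bounds $\len(R/K_q)$ from below directly. After completing at $\p$, it passes modulo a minimal prime $\q$ for which the image of $x$ remains outside $(L+\q N/\q N)^*_{N/\q N}$, thereby reducing to a complete local \emph{domain} (equidimensionality ensures $\dim(R/\q)=d$). By Cohen's structure theorem one descends to a complete regular subring $(A,\n)$ over which $R$ is module-finite, and after replacing $c$ by a multiple one may take $c\in A^\circ$. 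Then the containment $L^{[q]}_N :_A x^q \subseteq \n^{[q/q_1]}$ from the last paragraph of the proof of \cite[Theorem~8.17]{HHmain}, together with the exact sequence
\[
0 \to A/(\n^{[q/q_1]}:_A c) \to A/\n^{[q/q_1]} \to (A/cA)/\n^{[q/q_1]}(A/cA) \to 0,
\]
yields $\len(R/K_q) \geq \len(A/\n^{[q/q_1]}) - \len((A/cA)/\n^{[q/q_1]}(A/cA)) = (q/q_1)^d - O(q^{d-1})$, since $c$ is a nonzerodivisor on the regular ring $A$. No non-membership in tight closure beyond the original $x \notin L^*_N$ is ever needed, and this is exactly why the reducedness hypothesis can be dropped.
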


\begin{proof}
Let $x$ be as in the theorem.  We may assume that all modules are finite and that $M=L+Rx$.  By \cite[Prop 3.3(g)]{HHexponent}, we have $T^N_L(x) \neq \emptyset$.  Take any $\p \in T^N_L(x)$.  Let $Q := \{q=p^e \mid \p \text{ is minimal over } (L^{[q]}_N :_R c x^q_N)\}$.  By \cite[Prop 3.1]{HHexponent}, $\N \setminus \{e \mid p^e \in Q\}$ is finite.  Moreover, $R_\p / ((L^{[q]}_N)_\p :_{R_\p} c x^q_{N_\p})$ has finite length over $R_\p$ for all $q \in Q$.

Now we can localize at $\p$ and complete the ring.  Replacing $R$ by $\widehat{R_\p}$, the new ring $R$ is complete and equidimensional.

We have an exact sequence \[
0 \ra \frac{R}{L^{[q]}_N :_R c x^q_N} \stackrel{\cdot c}{\longrightarrow} \frac{R}{L^{[q]}_N :_R x^q_N}
\]
because $L^{[q]}_N :_R c x^q_N = (L^{[q]}_N :_R x^q_N) :_R c$ for all $q\in Q$ (hence for all $q\gg 0$).  Applying the left-exact functor $\Gamma_\m(-)$ to it, and using the fact that $\len(R / (L^{[q]}_N :_R c x^q_N))< \infty$, we get another exact sequence \[
0 \ra \frac{R}{L^{[q]}_N :_R c x^q_N} \stackrel{\cdot c}{\longrightarrow} \Gamma_\m\left(\frac{R}{L^{[q]}_N :_R x^q_N}\right).
\]
Hence, $\len(R/(L^{[q]}_N :_R c x^q_N)) \leq \len(\Gamma_\m(R/(L^{[q]}_N :_R x^q_N)))$ for all $q\gg 0$.  Therefore, to show the claim of the theorem, we need only show that $\len_R(R/(L^{[q]}_N :_R c x^q_N))$ is bounded below by a constant multiple of $q^d$.

Since tight closure can be checked modulo minimal primes, there is some minimal prime $\q$ of $R$ such that the image of $x$ is not in $(L + \q N / \q N)^*_{N/\q N}$ as $(R/\q)$-modules.  Since $ \dim (R/\q)=\dim R = d$ (by the assumption of equidimensionality), and since the length of the desired quotient can only decrease when the colon is computed modulo $\q$, we may replace $R$ by $R/\q$ and assume that $R$ is a complete local \emph{domain}.  After doing this, we note that by the Cohen structure theorem, $R$ is module-finite and torsion-free over a complete regular local ring $(A,\n)$, and by replacing $c$ by a multiple we may assume that $c \in A^\circ$.

Next, we see that \[
\frac{R}{L^{[q]}_N :_R c x^q_N} \cong \frac{L^{[q]}_N + R c x^q_N}{L^{[q]}_N}
\supseteq \frac{L^{[q]}_N + A c x^q_N}{L^{[q]}_N} \cong \frac{A}{L^{[q]}_N :_A c x^q_N}.
\]
In the above, the Frobenius computations are being done over $R$.  The first isomorphism is as $R$-modules, hence also as $A$-modules, and the second isomorphism is as $A$-modules.  However, the $A$-module length of an $R$-module is the same as its $R$-module length, so it makes sense (and is true) to say that $\len(A/(L^{[q]}_N :_A c x^q_N)) \leq \len(R/(L^{[q]}_N :_R c x^q_N))$ for all $q\gg 0$.

Since $x \notin L^*_N$, then by the last paragraph of the proof of \cite[Theorem 8.17]{HHmain}, there is some power $q_1$ of $p$ such that \[
L^{[q]}_N :_A x^q_N \subseteq \n^{[q/q_1]}
\]
for all $q \gg 0$.  Thus, \[
L^{[q]}_N :_A c x^q_N = (L^{[q]}_N :_A x^q_N) :_A c \subseteq \n^{[q/q_1]} :_A c,
\]
which implies that $\len(A/(\n^{[q/q_1]} :_A c)) \leq \len(A/(L^{[q]}_N :_A c x^q_N))$.

Next, we have the following short exact sequence of $A$-modules: \[
0 \ra \frac{A}{\n^{[q/q_1]} :_A c} \stackrel{\cdot c}{\longrightarrow} \frac{A}{\n^{[q/q_1]}}
\ra \frac{\bar{A}}{\n^{[q/q_1]} \bar{A}} \ra 0,
\]
where $\bar{A} := A/cA$.  Combining the length equality we get from this sequence with the inequalities we have thus far, we have for all $q \in Q$:  \begin{align*}
 \len \left( \frac{R}{L^{[q]}_N :_R cx^q_N} \right) &\geq \len \left( \frac{A}{L^{[q]}_N :_A c x^q_N} \right) 
\geq \len \left( \frac{A}{\n^{[q/q_1]} :_A c}\right)\\
&= \len \left(\frac{A}{\n^{[q/q_1]}}\right) - \len \left( \frac{\bar{A}}{\n^{[q/q_1]}\bar{A}} \right).
\end{align*}

Dividing the difference in the last line by $q^d$ and taking the limit as $q$ approaches infinity, we get $1/q_1^d$ which, as required, is positive.
\end{proof}

\begin{rmk*}
By \cite[Proposition 3.3(a)]{HHexponent} (in light of \cite[footnote 6]{nmepdep}), if tight closure commutes with localization for the inclusion $L \subseteq N$, then $T^N_L = \Ass_R(N/L^*_N)$, which is a finite set.  So in this situation, only finitely many primes need to be checked in order to use the test in Theorem~\ref{thm:zerotc}.
\end{rmk*}

\section{Relative multiplicity: special cases}\label{sec:examples}
In this section, we give conditions under which a converse to Theorem~\ref{thm:zerotc} holds, so that we have a criterion that determines exactly when two submodules share the same tight closure.

For a first example, we note that if $R$ is weakly F-regular at all non-maximal primes, then $L \subseteq M \subseteq L^*_N$ implies that $M_\m / L_\m$ has finite length for all maximal ideals $\m$, so that Theorem~\ref{thm:HHlen} then yields the converse to Theorem~\ref{thm:zerotc} in this case.

\subsection*{Relative multiplicity and finite projective dimension}
Let $J$ be an ideal of finite projective dimension with no embedded primes (\emph{e.g.}, a parameter ideal in a Cohen-Macaulay local ring),  and $I$ an ideal such that $J \subseteq I \subseteq J^*$.  Then $\rjj{R_\p}{J_\p}{I_\p}= 0$ for all prime ideals $\p$.

To see this, let $G.$ be a finite projective resolution of $R/J$.  Then for any $q=p^e$, we have that $F^e(G.)$ is a projective resolution of $R/J^{[q]}$, by \cite{PS-thesis}.  Then since projective resolutions commute with localization and by the conditions for exactness of a complex \cite{BE-exact}, $J$ and $J^{[q]}$ have the same associated primes (namely, the minimal primes of $J$).  Thus we have \[
\Ass(I^{[q]} / J^{[q]}) \subseteq \Ass(R/J^{[q]}) = \Ass(R/J) = \Min(R/J).
\]

Hence for any prime $\p \notin \Min(R/J)$, we have $H^0_\p(I^{[q]} / J^{[q]}) = 0$ for all $q$, so that $\rjj{R_\p}{J_\p}{I_\p} = 0$.  On the other hand, for any $\p \in \Min(R/J)$, we have that $J_\p$ and $I_\p$ are primary to the maximal ideal of $R_\p$, so that for these primes, \[
\rjj{R_\p}{J_\p}{I_\p} = \eHK(J_\p) - \eHK(I_\p) = 0,
\]
with the last equality holding because $J_\p \subseteq I_\p \subseteq (J^*)_\p \subseteq (J_\p)^*$.

The same argument shows something slightly more general.  Namely,

\begin{prop}\label{pr:tczero-pdim}
Let $(R,\m)$ be a Noetherian local ring, and let $L \subseteq M \subseteq N$ be $R$-modules such that $N/L$ is a finite module of finite projective dimension with no embedded primes.  Then $\rjj {N_\p}{L_\p}{M_\p}$ is well-defined and finite for all $\p \in \Spec R$.  Moreover, $M \subseteq L^*_N$ if and only if $\rjj {N_\p}{L_\p}{M_\p}=0$ for all $\p \in \Spec R$.
\end{prop}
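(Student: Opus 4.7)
The plan is to extend the ideal-case computation carried out in the paragraph immediately preceding the proposition ($L=J$, $M=I$, $N=R$) to the submodule setting via a module-theoretic Peskine--Szpiro + Buchsbaum--Eisenbud argument, and then to combine the resulting reduction to $\Min(\Supp(N/L))$ with Theorems~\ref{thm:HHlen} and~\ref{thm:zerotc} to yield the equivalence. Concretely, fix a finite projective resolution $G_\bullet$ of $N/L$. Since $N/L$ has finite projective dimension, Peskine--Szpiro gives that $F^e(G_\bullet)$ is a projective resolution of $F^e(N/L)\cong F^e(N)/L^{[q]}_N$ (the isomorphism is just right exactness of $F^e$). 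Frobenius preserves Betti ranks and sends each ideal of minors of a differential to an ideal with the same radical, so the Buchsbaum--Eisenbud acyclicity criterion together with the Auslander--Buchsbaum formula applied to both $G_\bullet$ and $F^e(G_\bullet)$ yields $\Supp F^e(N/L)=\Supp(N/L)$ and $\depth(F^e(N/L))_\p=\depth(N/L)_\p$ at every $\p$. Since $N/L$ has no embedded primes, neither does $F^e(N/L)$, so
\[
\Ass_R\!\bigl(M^{[q]}_N/L^{[q]}_N\bigr)\subseteq\Ass_R F^e(N/L)=\Min\bigl(\Supp(N/L)\bigr).
\]

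With this in hand, for $\p\notin\Min(\Supp(N/L))$ no associated prime of $(M^{[q]}_N/L^{[q]}_N)_\p$ equals $\p R_\p$, so $\Gamma_{\p R_\p}((M^{[q]}_N/L^{[q]}_N)_\p)=0$ for every $q$, whence $\rjj{N_\p}{L_\p}{M_\p}=0$. For $\p\in\Min(\Supp(N/L))$ the module $(N/L)_\p$ has finite length over $R_\p$, so the relative multiplicity reduces (via Monsky's theorem) to an ordinary Hilbert--Kunz difference and is therefore well-defined and finite. For the equivalence, the forward direction follows by persistence of tight closure under localization combined with Theorem~\ref{thm:HHlen}(a) applied at each of the (finitely many) minimal primes of $\Supp(N/L)$. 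For the converse, the containment above forces any stable associated prime $\p\in T^N_L(x)$ for $x\in M\setminus L^*_N$ to lie in $\Min(\Supp(N/L))$, so the hypothesis $\rjj{N_\p}{L_\p}{M_\p}=0$ for every $\p$ feeds directly into Theorem~\ref{thm:zerotc} and forces $M\subseteq L^*_N$.

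The main obstacle lies in the first step, namely the identification $\Ass_R F^e(N/L)=\Ass_R(N/L)$. In the ideal case this follows quickly from the shape of a minimal resolution of $R/J$, but in the submodule setting one has to combine three ingredients carefully: Peskine--Szpiro to transport the resolution under $F^e$, preservation of the radicals of the ideals of minors under raising entries to $q$-th powers (so that the depth conditions appearing in Buchsbaum--Eisenbud are unchanged), and the Auslander--Buchsbaum formula at each prime of the support. A subsidiary point is that the converse implication silently inherits the standing excellent-ring conventions of the paper (existence of a completely stable weak test element, equidimensional completions on $T^N_L$), without which Theorem~\ref{thm:zerotc} would not be available.
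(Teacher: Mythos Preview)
Your proposal is correct and follows essentially the same approach as the paper: the paper's proof is just a two-line invocation of the paragraph immediately preceding the proposition (Peskine--Szpiro plus Buchsbaum--Eisenbud to get $\Ass F^e(N/L)=\Min(\Supp(N/L))$, then split into the cases $\p\in\Min$ and $\p\notin\Min$), and you have reproduced and fleshed out exactly that argument, adding the Auslander--Buchsbaum step to make the depth-preservation explicit. Your treatment is in fact more complete than the paper's, since you spell out the converse direction via Theorem~\ref{thm:zerotc} and correctly flag the implicit hypotheses it requires; the only superfluous step is the observation that $T^N_L(x)\subseteq\Min(\Supp(N/L))$, which, while true, is not needed once you assume vanishing at \emph{all} primes.
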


\begin{proof}
As usual, we may assume all the modules are finitely generated.  Then:
\[
0 \leq \rjj {N_\p}{L_\p}{M_\p} = \begin{cases}
0, & \p \notin \Min(N/L)\\
\displaystyle \lim_{q \ra \infty} \frac{\len_{R_\p}(M_\p^{[q]} / L_\p^{[q]})}{q^{\height \p}}, & \p \in \Min(N/L),\\
\end{cases}
\]
and hence if $M \subseteq L^*_N$, Theorem~\ref{thm:HHlen} applied to $R_\p$ shows that $\rjj {N_\p}{L_\p}{M_\p} =0$ for all $\p$.
\end{proof}

\subsection*{Relative multiplicity and finite F-representation type}
The concept of finite F-representation type is due to Smith and van der Bergh \cite{SmV-sim}.
\begin{defn}
Let $R$ be an F-finite Noetherian local ring.  It has \emph{finite F-representation type} (abbreviated \emph{FFRT}) if there is a finite set of finitely generated $R$-modules $M_1, \dotsc, M_s$ and integers $c_{i,e}$ for all $1 \leq i \leq s$ and all positive integers $e$ such that $\displaystyle {}^eR \cong \bigoplus_{i=1}^s M_i^{\oplus c_{i,e}}$ as $R$-modules, for all $e$.
\end{defn}

Smith and van der Bergh observed that the following classes of local F-finite rings have finite F-representation type: \begin{itemize}
\item regular rings
\item rings of finite Cohen-Macaulay type
\item any direct summand of a ring of finite F-representation type
\end{itemize}
On the other hand, they showed that the cubical cone $k[\![X,Y,Z]\!] / (X^3+Y^3+Z^3)$ does \emph{not} have FFRT.\footnote{Note that rings with FFRT need not be F-regular (or even F-rational).  Shibuta \cite{Sh-1dFFRT} proved that if $R$ is any 1-dimensional complete local domain of prime characteristic whose residue field is either finite or algebraically closed, then $R$ has finite F-representation type.}

The second named author took the study of such rings further.  From this point on, we fix the modules $M_1, \dotsc, M_s$ in the definition, and we assume that they are indecomposable, nonzero, and of distinct isomorphism classes.  For simplicity, we assume in the following that $R$ is complete, so that it satisfies the Krull-Schmidt condition and the numbers $c_{i,e}$ are uniquely determined. Let $a := [k : k^p]$.  Then
\begin{thm}\label{thm:Yao}\cite{Yao-FFRT}
Under the above circumstances, \begin{enumerate}
\item The limit \[
\ell_i := \lim_{e \ra \infty} \frac{c_{i,e}}{(ap^d)^e}
\]
is well-defined and finite for $1\leq i \leq s$.

\noindent{\rm (Without loss of generality, we assume from this point on that $\ell_i >0$ for $1\leq i \leq r$ and $\ell_i=0$ for $r<i\leq s$.  The modules $M_1, \dotsc, M_r$ are called the \emph{F-contributors}.)}

\item $r \geq 1$. (That is, there is at least one F-contributor.)

\item Let $U := \displaystyle \bigoplus_{i=1}^r M_i$.  For any finitely generated modules $L \subseteq N$, we have \[
L^*_N \subseteq \ker (N \ra \Hom_R(U, (N/L) \otimes_R U)),
\]
where the map is defined by $n \mapsto (u \mapsto \bar{n} \otimes u)$.  If $N=R$, this just means that $L^* \subseteq \ann_R(U/LU)$.

\item If $R$ has a completely stable test element and $\hat{R}$ is reduced\footnote{Here too, the reducedness assumption appears to be unnecessary, in light of methods used in the proof of Theorem~\ref{thm:zerotc}.} and equidimensional, then the displayed containment above becomes an equality (so that when $N=R$, $L^* = \ann_R(U/LU)$).
\end{enumerate}
\end{thm}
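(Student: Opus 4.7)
The plan is to prove the four parts in order, using as the central computational tool the identity
\[
\sum_{i=1}^s c_{i,e}\,\len(M_i/IM_i) \;=\; \len({}^eR/I\,{}^eR) \;=\; a^e\,\len(R/I^{[p^e]})
\]
valid for any $\m$-primary ideal $I$, where the second equality is Kunz's formula $\len({}^eM)=a^e\len(M)$. Dividing by $(ap^d)^e=a^ep^{ed}$ and letting $e\to\infty$, the right-hand side tends to $\eHK(I)$ by Monsky's theorem.

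For part (1), I would choose $\m$-primary ideals $I_1,\dotsc,I_s$ so that the matrix $\bigl(\len(M_i/I_j M_i)\bigr)_{i,j}$ is invertible over $\Q$; this is possible because the $M_i$ are pairwise non-isomorphic finite indecomposables, so sufficiently many probing ideals give linearly independent invariants. Inverting the resulting $s\times s$ linear system expresses each $c_{i,e}/(ap^d)^e$ as a fixed $\Q$-linear combination of the sequences $\len(R/I_j^{[p^e]})/p^{ed}$, each convergent by Monsky's theorem; hence $\ell_i$ exists and is finite. Part (2) is then immediate: specializing the limiting identity to $I=\m$ yields $\sum_i\ell_i\,\mu(M_i)=\eHK(\m)\ge 1>0$, so at least one $\ell_i$ must be positive.

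For part (3), use the canonical identification $F^e(N)/L^{[q]}_N\cong(N/L)\otimes_R{}^eR$. If $x\in L^*_N$ with test element $c$, then $c\bar x\otimes 1_{{}^eR}=0$ in $(N/L)\otimes{}^eR$ for $q\gg 0$. Decomposing ${}^eR=\bigoplus_i M_i^{c_{i,e}}$ and projecting onto each F-contributor summand $M_i$ yields $c\bar x\otimes v_{i,k}=0$ in $(N/L)\otimes M_i$ for each $i\le r$ and each copy $k$, where $v_{i,k}$ is the $(i,k)$-component of $1_{{}^eR}$. As $e$ and the splittings vary, a Nakayama-type argument (using that $1_{{}^eR}\notin\m\,{}^eR$ together with the mass estimate $c_{i,e}/(ap^d)^e\to\ell_i>0$) shows that the reachable $v_{i,k}$ generate each $M_i$ as an $R$-module; this gives $c\bar x\otimes u=0$ in $(N/L)\otimes U$ for every $u\in U$. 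The factor $c$ is then removed by a standard test-element absorption argument (exploiting that $c\in R^\circ$ is generic), yielding the stated containment $\bar x\otimes u=0$.

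The main obstacle is part (4), whose proof I would approach contrapositively: given $x\notin L^*_N$, produce $u\in U$ with $\bar x\otimes u\ne 0$ in $(N/L)\otimes U$. Following the strategy of Theorem~\ref{thm:zerotc}, one localizes and completes at a stable prime in $T^N_L(x)$, reduces to the complete local domain case, and descends to a regular coefficient subring $A$ on which $L^{[q]}_N:_A c x^q_N\subseteq\n^{[q/q_1]}$ for large $q$; this yields a nonvanishing length bound of order $q^d$ for the image of $c\bar x\otimes 1_{{}^eR}$ in $(N/L)\otimes{}^eR$. The hard step is localizing this obstruction inside the F-contributor summands: because $\sum_{i>r}c_{i,e}/(ap^d)^e\to 0$, the length contribution from non-contributor summands to $(N/L)\otimes{}^eR$ is $o(q^d)$ and cannot absorb an obstruction of order $q^d$, so for infinitely many $e$ the $U$-component of $\bar x\otimes 1_{{}^eR}$ must be nonzero, producing the desired $u$. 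The reducedness hypothesis in the statement is removable by the equidimensional-quotient trick used in the proof of Theorem~\ref{thm:zerotc}, as noted in the footnote.
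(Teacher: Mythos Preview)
This theorem is not proved in the present paper; it is quoted from \cite{Yao-FFRT} and used only as input to Theorem~\ref{thm:tczero-FFRT} and Proposition~\ref{prop:reg}. There is therefore no proof here against which to compare your proposal.

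Judged on its own, your treatment of (1) and (2) is the right strategy and is essentially Yao's: the identity $\sum_i c_{i,e}\len(M_i/IM_i)=a^e\len(R/I^{[q]})$ plus Monsky does the work once the length matrix is invertible. The existence of $\m$-primary $I_1,\dotsc,I_s$ making that matrix nonsingular is the genuinely nontrivial step in (1), and you have not addressed it. Your outline of (4) is also broadly correct; the mass-balance idea (non-contributor summands carry total multiplicity $o((ap^d)^e)$, so an order-$q^d$ obstruction must live in the $U$-part) is indeed the engine of that direction.

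Part (3), however, has a real gap at both closing moves. First, the components $v_{i,k}$ of $1\in{}^eR$ in the $M_i$-summands have no reason to generate $M_i$: the direct-sum splitting is non-canonical, $1\notin\m\,{}^eR$ only guarantees that \emph{some} component lies outside $\m M_j$ for \emph{some} $j$, and the growth rate $c_{i,e}\sim\ell_i(ap^d)^e$ says nothing about which elements of $M_i$ arise as components. Second, and fatally, there is no ``standard test-element absorption argument'' that removes $c$: the module $(N/L)\otimes_R M_i$ can have $c$-torsion, so $c(\bar x\otimes u)=0$ does not imply $\bar x\otimes u=0$. The correct route to (3) does not attempt to strip $c$; it is an asymptotic length argument of the same flavor you sketched for (4), run in the contrapositive: a nonzero $\bar x\otimes u$ in an F-contributor would, via the $c_{i,e}$ copies of $M_i$ inside ${}^eR$, force growth incompatible with $x\in L^*_N$.
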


Here we compute $\rjj NLM$ when $R$ is complete and has FFRT:  

For any finitely generated $R$-module $Z$, and any $e$, we have ${}^e(F^e(Z)) \cong Z \otimes_R {}^eR$.  Using this and the fact that ${}^e(-)$ is an exact functor, if $R$ is complete we have \begin{align*}
{}^e (M^{[q]}_N / L^{[q]}_N) &= {}^e \ker(F^e(N/L) \ra F^e(N/M)) \\
&\cong \ker({}^e(F^e(N/L)) \ra {}^e(F^e(N/M))) \\
&\cong \ker ( (N/L) \otimes {}^eR \ra (N/M) \otimes {}^eR) \\
&= \bigoplus_{i=1}^s \ker( (N/L) \otimes_R M_i \ra (N/M) \otimes_R M_i)^{\oplus c_{i,e}}.
\end{align*}
Since $\Gamma_\m$ is left-exact and commutes with ${}^e(-)$, we have \[
{}^e \Gamma_\m(M^{[q]}_N / L^{[q]}_N) \cong  \bigoplus_{i=1}^s \Gamma_\m(\ker( (N/L) \otimes_R M_i \ra (N/M) \otimes_R M_i))^{\oplus c_{i,e}}.
\]
Set $K_i := \Gamma_\m(\ker( (N/L) \otimes_R M_i \ra (N/M) \otimes_R M_i))$ for $1 \leq i \leq s$.
Since $\len({}^e Z) = a^e \len(Z)$ for any finite length $R$-module $Z$, we have \begin{align*}
\rjj NLM &= \lim_{q \ra \infty} \frac{\len(\Gamma_\m(M^{[q]}_N / L^{[q]}_N))}{p^{de}}
=  \lim_{q \ra \infty} \frac{\len({}^e \Gamma_\m(M^{[q]}_N / L^{[q]}_N)}{(a p^d)^e} \\
&= \sum_{i=1}^s  \len(K_i) \lim_{e \ra \infty} \frac{c_{i,e}}{(a p^d)^e} = \sum_{i=1}^r \len(K_i) \ell_i.
\end{align*}
(The sum only goes to $r$, since the limits equal 0 for $r<i\leq s$.)

Now we can state a converse to Theorem~\ref{thm:zerotc} for rings with FFRT.
\begin{thm}\label{thm:tczero-FFRT}
Let $R$ be an F-finite ring with FFRT, and let $L \subseteq M \subseteq N$ be finitely generated $R$-modules.  If $M \subseteq L^*_N$, then $\rjj {N_\p}{L_\p}{M_\p} = 0$ for all $\p\in \Spec R$.
\end{thm}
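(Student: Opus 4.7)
The plan is to reduce to the case where $R$ is a complete local F-finite ring with FFRT, and then combine the explicit formula for $\rjj{N}{L}{M}$ derived immediately before the statement with Theorem~\ref{thm:Yao}(3) to show each summand of that formula vanishes.

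First, I would localize at $\p$ and then $\p R_\p$-adically complete, reducing to showing $\rjj{N}{L}{M} = 0$ for a complete local F-finite ring $(R,\m)$ with FFRT (and finitely generated $L \subseteq M \subseteq N$ with $M \subseteq L^*_N$). This rests on three preservation facts: F-finiteness and the decomposition ${}^eR \cong \bigoplus M_i^{\oplus c_{i,e}}$ pass through both localization and $\m$-adic completion; the containment $M \subseteq L^*_N$ survives both steps (the first formally from the definition, the second from the standard compatibility of tight closure with completion of a local ring); and $\rjj{N}{L}{M}$ itself is unchanged by $\m$-adic completion of $R$, since $\Gamma_\m(M^{[q]}_N / L^{[q]}_N)$ has finite length, and finite length modules together with their $R$-lengths are preserved by completion.

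Second, in this reduced setting, the formula
\[
\rjj{N}{L}{M} \;=\; \sum_{i=1}^r \len(K_i)\,\ell_i, \qquad K_i = \Gamma_\m\bigl(\ker((N/L)\otimes_R M_i \to (N/M)\otimes_R M_i)\bigr),
\]
together with $\ell_i > 0$ for $1 \le i \le r$, reduces the problem to showing $K_i = 0$ for each such $i$. By right exactness of $-\otimes_R M_i$, the kernel appearing in $K_i$ is the image of $(M/L)\otimes_R M_i$ in $(N/L)\otimes_R M_i$, so it suffices to verify that $\bar m \otimes u = 0$ in $(N/L)\otimes_R M_i$ for every $m \in M$ and $u \in M_i$. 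This is exactly what Theorem~\ref{thm:Yao}(3) provides: with $U = \bigoplus_{i=1}^r M_i$, every $m \in L^*_N \supseteq M$ satisfies $\bar m \otimes u = 0$ in $(N/L)\otimes_R U$ for all $u \in U$, and projecting onto the $i$-th summand yields the desired vanishing in $(N/L)\otimes_R M_i$. Thus the kernel is zero, $K_i = 0$, and the formula gives $\rjj{N}{L}{M} = 0$.

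I expect the main obstacle to be the bookkeeping in the first step — carefully verifying that after replacing $R$ by $\widehat{R_\p}$ the FFRT hypothesis really is inherited and that the formula genuinely applies there — rather than the vanishing argument itself, which is essentially immediate from Theorem~\ref{thm:Yao}(3). In particular, no completely stable test element or reducedness hypothesis is needed for this direction, consistent with our proving the easier implication of the conjectural equivalence.
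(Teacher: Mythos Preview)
Your proposal is correct and follows essentially the same route as the paper: reduce to the local setting, invoke Theorem~\ref{thm:Yao}(3) to see that the image of $M/L$ in $(N/L)\otimes_R U$ vanishes (equivalently, that $(N/L)\otimes_R M_i \to (N/M)\otimes_R M_i$ is an isomorphism for each F-contributor $M_i$), conclude $K_i=0$, and then read off $\rjj{N}{L}{M}=0$ from the displayed formula $\sum_i \len(K_i)\ell_i$. The only notable difference is that the paper localizes but does not explicitly complete (citing persistence of tight closure in localizations and that FFRT localizes), whereas you also pass to the completion; your extra care here is appropriate, since the formula and Theorem~\ref{thm:Yao} were stated under a standing completeness hypothesis, and the preservation facts you list (FFRT, tight closure, and finite-length invariants all surviving completion) are routine.
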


\begin{proof}
Since tight closure persists in localizations \cite[Theorem 6.24]{HHbase}, and since the FFRT property localizes, we may assume that $(R,\m)$ is local and just show that $\rjj NLM=0$.

Let $M_i$, $s$, $r$, $\ell_i$, and $U$ be as in Theorem~\ref{thm:Yao}, and let $K_i$ be as in the discussion above.  Since $M \subseteq L^*_N$, part (3) of that theorem implies that $M \subseteq \ker (N \ra N/L \ra \Hom_R(U, (N/L) \otimes_R U))$.  Translating, this means that the natural map \[
(N/L) \otimes_R U \ra (N/M) \otimes_R U
\]
is an isomorphism, whence the natural maps $(N/L) \otimes_R M_i \ra (N/M) \otimes_R M_i$ are isomorphisms, and hence $K_i=\Gamma_\m(\ker ((N/L) \otimes_R M_i \ra (N/M) \otimes_R M_i)) =\Gamma_\m(0)=0$, for $1\leq i \leq r$.  Thus we have \[
\rjj NLM = \sum_{i=1}^r \len(K_i) \ell_i = 0. \qedhere
\]
\end{proof}

\subsection*{Relative multiplicity and finite generation of $R[x;f]$-modules}
One of the standard constructions in noncommutative ring theory is the \emph{skew polynomial ring} (cf. \cite[Example 1.7]{Lam-1}, or almost any other introductory text on noncommutative rings).  Given a ring $R$, an indeterminate $x$, and a ring endomorphism $f: R \ra R$, the skew polynomial ring $S := R[x; f]$ is an $\N$-graded $R$-algebra, which looks like $\bigoplus_{n\geq 0} R x^n$ as a graded $R$-module, with multiplication given by $(r x^n) (s x^m) := r f^n(s) x^{m+n}$ for $r, s \in R$.  When $R$ is a commutative Noetherian ring of prime characteristic $p$, this ring and its modules were first studied by Yuji Yoshino \cite{Yo-Frob}, and were studied much further, to great effect, by Lyubeznik \cite{Lyu-Fmod} (who called some of the $S$-modules `F-modules'), and by Rodney Sharp and his collaborators (\emph{e.g.}, \cite{NoSh, KaSh-Frob, Sha-HSL, Sha-grann}).  In particular, Sharp studied various right and left $R[x;f]$-module structures on top local cohomology modules of finite $R$-modules, obtaining striking results on parameter test elements.

Let $\cM :=\displaystyle \bigoplus_{e \geq 0} (M^{[p^e]}_N / L^{[p^e]}_N) X^e$, and $\cH := \Gamma_\m(\cM) = \displaystyle \bigoplus_{e \geq 0} \Gamma_\m(M^{[p^e]}_N / L^{[p^e]}_N) X^e$, where $L \subseteq M \subseteq N$ are fixed $R$-modules such that $M/L$ is finite.

Then $\cM$ is a graded left $R[x;f]$-module, with $R[x;f]$-action given by $x \cdot \overline{m} X^e := \overline{m^p} X^{e+1}$.  Moreover, it is finitely generated in degree $0$ by the $R$-generators of $M/L$, and $\cH$ is a graded left $R[x;f]$-submodule of $\cM$.  Since $R[x;f]$ is almost never left- (or right-) Noetherian \cite{Yo-Frob}, we cannot assume that an arbitrary left submodule of a finite left $R[x;f]$-module is finitely generated. However:

\begin{thm}\label{thm:tczero-fg}
Suppose $(R,\m)$ is local and $\cH$ is finitely generated as a left $R[x;f]$-module.  If $M \subseteq L^*_N$, then $\rjj NLM = 0$.
\end{thm}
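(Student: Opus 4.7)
My plan is to combine the finite generation of $\cH$ over $R[x;f]$ with the existence (forced by $M \subseteq L^*_N$) of a single element $c \in R^\circ$ that annihilates each $M^{[q]}_N/L^{[q]}_N$ for $q \gg 0$, ultimately showing $\len(\cH_e) = O(p^{(d-1)e})$, where $\cH_e := \Gamma_\m(M^{[p^e]}_N/L^{[p^e]}_N)$.

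First I produce the common multiplier. Since $M/L$ is finitely generated, pick representatives $y_1,\ldots,y_t \in M$ of generators of $M/L$. Each $y_j \in L^*_N$ furnishes $c_j \in R^\circ$ and $q_j$ with $c_j(y_j)^q_N \in L^{[q]}_N$ for $q \geq q_j$; set $c := \prod_j c_j \in R^\circ$ and $q_0 := \max_j q_j$. Then $c \cdot M^{[q]}_N \subseteq L^{[q]}_N$ for every $q \geq q_0$, so in particular $c \cdot \cH_e = 0$ once $p^e \geq q_0$.

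Next, the hypothesis $\cH = \sum_{i=1}^k R[x;f]\,\eta_i$ with homogeneous $\eta_i \in \cH_{e_i}$ gives that $\cH_e$ is generated as an $R$-module by $\{x^{e-e_i}\eta_i : e_i \leq e\}$, a set of size at most $k$. Since $\eta_i \in \Gamma_\m$, choose $n_i$ with $\m^{n_i}\eta_i = 0$; lifting $\eta_i$ to $m_i \in M^{[p^{e_i}]}_N$, we have $\m^{n_i}\,m_i \subseteq L^{[p^{e_i}]}_N$. Applying the Frobenius functor $(e-e_i)$ times and using the identity $(rm)^q_M = r^q\, m^q_M$ coming from the tensor relations in $F^e(-) = - \otimes_R {}^eR$, we obtain $(\m^{n_i})^{[p^{e-e_i}]}\, m_i^{p^{e-e_i}} \subseteq L^{[p^e]}_N$. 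Thus $x^{e-e_i}\eta_i$ is annihilated by $(\m^{n_i})^{[p^{e-e_i}]} + cR$ for all sufficiently large $e$, and consequently
\[
\len(R \cdot x^{e-e_i}\eta_i) \leq \len_{R/cR}\!\bigl((R/cR)/(\m^{n_i})^{[p^{e-e_i}]}(R/cR)\bigr).
\]

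Finally, since $c \in R^\circ$ every minimal prime of $(c)$ has positive height in $R$, so $\dim(R/cR) \leq d-1$; applying Monsky's Hilbert--Kunz theorem in $R/cR$ to the $\m_{R/cR}$-primary ideal $\m^{n_i}(R/cR)$ bounds the right-hand side by a constant times $(p^{e-e_i})^{d-1} = O(p^{(d-1)e})$. Summing the $k$ cyclic contributions yields $\len(\cH_e) = O(p^{(d-1)e})$ and hence $\limsup_{e\to\infty}\len(\cH_e)/p^{de}=0$, which gives $\rjj NLM = 0$. The delicate point is the annihilator computation: one needs the Frobenius action coming from $x \in R[x;f]$ to convert the $\m$-power annihilator $\m^{n_i}$ of $\eta_i$ into the \emph{bracket} power $(\m^{n_i})^{[p^{e-e_i}]}$, not just an ordinary power of $\m$. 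This bracket-power annihilator, together with the dimension drop from $d$ to $d-1$ in $R/cR$ forced by $M \subseteq L^*_N$, is precisely what supplies the extra factor of $p^e$ needed to make the quotient tend to zero.
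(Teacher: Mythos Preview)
Your argument is correct. The paper's proof follows the same overall shape---use finite generation of $\cH$ to reduce to a fixed degree, then exploit $M \subseteq L^*_N$ to get an $O(q^{d-1})$ length bound---but packages the second step differently. After setting $L=0$, the paper observes that finite generation forces $\cH_e = H^{[q/q_0]}_{F^{e_0}(N)}$ for all $q \geq q_0$, where $H = \Gamma_\m(M^{[q_0]}_N)$, and then applies Theorem~\ref{thm:HHlen}(a) directly to the finite-length submodule $H \subseteq 0^*_{F^{e_0}(N)}$ to obtain $\len(\cH_e) \leq C(q/q_0)^{d-1}$. You instead treat each of the $k$ cyclic generators $x^{e-e_i}\eta_i$ separately and bound them one at a time by passing to $R/cR$ and invoking Monsky's theorem there. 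This is essentially what the proof of Theorem~\ref{thm:HHlen}(a) does under the hood, so your route is more self-contained (it does not cite Theorem~\ref{thm:HHlen}) at the cost of being a bit longer; the paper's route is shorter but relies on that earlier result as a black box.
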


\begin{proof}
We may immediately assume that $L=0$.

The hypotheses imply that there is some power $q_0$ of $p$ such that for all $q \geq q_0$, we have $\Gamma_\m(M^{[q]}_N) = H^{[q/q_0]}_{F^{e_0}(N)}$, where $H:=\Gamma_\m(M^{[q_0]}_N)$.   Then we have \[
\len(\Gamma_\m(M^{[q]}_N)) = \len(H^{[q/q_0]}_{F^{e_0}(N)}) \leq C (q/q_0)^{d-1} = (C/q_0^{d-1}) q^{d-1},
\]
where $C$ is a constant which is independent of $q$.  The last inequality is by Theorem~\ref{thm:HHlen}, since $H \subseteq M^{[q_0]}_N \subseteq 0^*_{F^{e_0}(N)}$. So $\rjj N0M =0$.
\end{proof}

\section{A specific example}\label{sec:gb}
Let $J \subseteq I$ be ideals with the same tight closure.  An analysis of the ideas surrounding Proposition~\ref{pr:tczero-pdim} yields the following observation: \emph{The critical situation occurs when there exist prime ideals $\p \subsetneq \m$ such that $\p, \m \in \Ass_R(I^{[q]}/J^{[q]})$ for infinitely many values of $q$.}  One may ask whether this can happen.  For instance, in Example~\ref{ex:addvar}, the critical situation does not occur for the ideals $J \subseteq I$ in $R$ unless it already was an issue for the ideals $\ib \subseteq \ia$ in $A$.  Indeed, for each $q$, there is a bijective correspondence between the sets $\Ass_A(\ia^{[q]}/\ib^{[q]})$ and $\Ass_R(I^{[q]}/J^{[q]})$, given by $\p \mapsto \p R$.

However, the situation outlined above can happen, as shown below.  Moreover, the expected converse to Theorem~\ref{thm:zerotc} holds, at least in the given example.  Note that we cannot get this result from Proposition~\ref{pr:tczero-pdim}, nor do we know how to obtain it from either Theorem~\ref{thm:tczero-FFRT} or Theorem~\ref{thm:tczero-fg}.  Therefore, we had to use computational methods.

Before we get to the specific characteristic $p$ situation, we give a somewhat more general construction, which works over any field, and may be of independent interest.  As we will be using Gr\"obner basis techniques, we set some notations and recall some facts:

\begin{defn}
Let $A$ be a polynomial ring, $>$ a monomial order, and $f\in A \setminus \{0\}$.  The expressions $\lt(f)$ and $\lm(f)$ denote, respectively, the leading term and the leading monomial of $f$ with respect to the given order.

Given two elements $f,g \in A \setminus \{0\}$, the \emph{S-polynomial} of $f$ and $g$ is given by \[
S(f,g) := \frac{\lcm(\lt(f), \lt(g))}{\lt(f)} \cdot f - \frac{\lcm(\lt(f), \lt(g))}{\lt(g)} \cdot g,
\]
where $\lcm$ means the least common multiple.
\end{defn}

The following theorem is a slightly nonstandard (albeit well-established) form of the \emph{Buchberger criterion}:

\begin{thm}\label{thm:Bucrit}\cite[Theorem 2.9.3]{CLO-book1e3}
Let $A$ be a polynomial ring over a field, let $>$ be a monomial order, and let $G = \{g_1, \dotsc, g_n\}$ be a finite subset of $A$.  Then $G$ is a Gr\"obner basis if and only if there exist elements $a_{ijk} \in A$ such that for each pair $(j,k)$ with $1\leq j <k \leq n$, we have \[
S(g_j,g_k) = \sum_{i=1}^n a_{ijk} g_i,
\]
in such a way that for each nonzero $a_{ijk}$, we have $\lm(S(g_j, g_k)) \geq \lm (a_{ijk} g_i)$ with respect to the given monomial order.
\end{thm}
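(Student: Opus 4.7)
The plan is to prove the two directions of the equivalence separately, with essentially all of the real work in the converse direction. For the forward direction, assume $G$ is a Gr\"obner basis. Each $S(g_j, g_k)$ lies in $\langle G \rangle$ by construction, so running the multivariate division algorithm with divisor list $G$ on $S(g_j,g_k)$ produces a remainder no term of which is divisible by any $\lt(g_i)$; since that remainder lies in $\langle G \rangle$ and $G$ is a Gr\"obner basis, the remainder must be $0$. The algorithm records a representation $S(g_j, g_k) = \sum_i a_{ijk} g_i$, and by inspection of the division procedure every monomial of each $a_{ijk} g_i$ produced is one of the monomials that appeared during reduction, hence is $\leq \lm(S(g_j, g_k))$. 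This is the desired representation.

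For the converse, the goal is to show that $\lt(g_1), \dotsc, \lt(g_n)$ generate $\init_>(\langle G \rangle)$, equivalently that any $f \in \langle G \rangle$ satisfies $\lm(g_i) \mid \lm(f)$ for some $i$. Fix $f$ and, using the well-ordering of monomials, choose a representation $f = \sum_i h_i g_i$ minimizing $\delta := \max_i \lm(h_i g_i)$. Certainly $\delta \geq \lm(f)$; the whole game is to rule out $\delta > \lm(f)$. In that case, with $T := \{i : \lm(h_i g_i) = \delta\}$, the leading terms $\lt(h_i) g_i$ for $i \in T$ must cancel at monomial $\delta$, since no term of that monomial survives in $f$.

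The key technical input, and the main obstacle, is the standard cancellation identity: any sum $\sum_{i \in T} \lt(h_i) g_i$ whose top-monomial parts cancel at $\delta$ can be rewritten as $\sum_{j<k} m_{jk} S(g_j, g_k) + r$ for suitable monomials $m_{jk}$ and some $r$ with $\lm(r) < \delta$ and $\lm(m_{jk} S(g_j,g_k)) < \delta$ for every $(j,k)$ contributing. This is a purely combinatorial fact about leading terms and the $\lcm$ factors in the definition of $S(g_j,g_k)$; it appears in \cite{CLO-book1e3} and can be taken as a black box. Plugging in the hypothesized representations $S(g_j,g_k) = \sum_i a_{ijk} g_i$ and using $\lm(a_{ijk} g_i) \leq \lm(S(g_j, g_k))$ yields $\lm(m_{jk} a_{ijk} g_i) < \delta$ for every surviving term. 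Substituting all of this back into the expression $f = \sum_i h_i g_i$ and collecting produces a new representation $f = \sum_i h_i' g_i$ with $\max_i \lm(h_i' g_i) < \delta$, contradicting the minimality of $\delta$. Hence $\delta = \lm(f)$, so some $h_i g_i$ has $\lm(h_i g_i) = \lm(f)$, forcing $\lm(g_i) \mid \lm(f)$ and finishing the proof.
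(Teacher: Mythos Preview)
The paper does not prove this theorem; it is quoted verbatim from \cite[Theorem 2.9.3]{CLO-book1e3} and used as a black box in Section~\ref{sec:gb}. So there is no ``paper's own proof'' to compare against. Your argument is essentially the standard proof from the cited textbook: division algorithm for the forward direction, and for the converse the minimal-$\delta$ representation combined with the cancellation lemma expressing a top-degree-cancelling sum of terms as a combination of monomial multiples of $S$-polynomials. That is correct and is exactly what Cox--Little--O'Shea do.

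Two small presentational points. First, in your statement of the cancellation lemma the extra ``$+r$'' is superfluous: once you have isolated $P := \sum_{i\in T}\lt(h_i)g_i$ (which already satisfies $\lm(P)<\delta$), the lemma rewrites $P$ \emph{exactly} as $\sum m_{jk}S(g_j,g_k)$ with each $\lm(m_{jk}S(g_j,g_k))<\delta$; the lower-order pieces you are calling $r$ are the terms $(h_i-\lt(h_i))g_i$ and $h_ig_i$ for $i\notin T$, which sit outside $P$. Second, you should note explicitly the degenerate case $S(g_j,g_k)=0$, where the condition on $\lm(S(g_j,g_k))$ is vacuous and one simply takes all $a_{ijk}=0$. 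Neither point affects correctness.
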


\begin{thm}\label{thm:elim} \cite[Theorem 4.3.11 and the discussion which follows]{CLO-book1e3}
Let $A$ be a polynomial ring over a field $k$, let $I$ be an ideal of $A$ and $0 \neq u\in A$.  Let $r$ be an indeterminate over $A$, and let $B = A[r]$ be a polynomial ring, ordered with lexicographic order in such a way that $r>x$ for all variables $x$ of $A$.  Let $\ia := rIB + (1-r)uB \subseteq B$.  Then $\ia \cap A = I \cap (u)$, and if $F$ is a Gr\"obner basis of $\ia$ in $B$, then $\ia \cap A$ is generated by the set of elements of $F$ whose leading terms are not multiples of $r$.
\end{thm}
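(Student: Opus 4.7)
The plan is to prove the theorem in two independent steps corresponding to its two assertions.

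For the set-theoretic identity $\ia \cap A = I \cap (u)$, I would argue both inclusions directly. Given $h \in I \cap (u)$, write $h = h' u$ with $h' \in A$ and observe the tautology $h = r \cdot h + (1-r) \cdot h' u$ in $B$; the first summand lies in $r I B$ because $h \in I$, and the second lies in $(1-r) u B$, so $h \in \ia \cap A$. Conversely, suppose $h \in \ia \cap A$, so $h = r \alpha + (1-r) \beta u$ for some $\alpha \in IB$ and $\beta \in B$. Apply the $k$-algebra homomorphism $B \to A$ that sends $r \mapsto 0$ and fixes $A$: the left side remains $h$ (since $h \in A$), and the right side becomes $\beta(0) \cdot u$, yielding $h \in (u)$. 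Substituting $r \mapsto 1$ instead gives $h = \alpha(1)$, which lies in $I$ because expanding $\alpha \in IB$ as an $A[r]$-combination of generators of $I$ and evaluating at $r=1$ preserves membership in $I$.

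For the Gr\"obner-basis statement I would invoke the standard Elimination Theorem. The key observation about the chosen lex order is: since $r$ is larger than every variable of $A$, any monomial in $B$ involving a positive power of $r$ is lex-greater than every monomial of $A$. Consequently, if $g \in B \setminus \{0\}$ has $\lm(g)$ not divisible by $r$, then no monomial of $g$ involves $r$, so $g \in A$. Setting $F' := \{g \in F : r \nmid \lm(g)\}$ we thus get $F' \subseteq \ia \cap A$. To see $F'$ generates $\ia \cap A$, take $h \in \ia \cap A$ and apply the multivariate division algorithm with divisor set $F$. The current remainder stays in $A$ throughout (by induction, since subtracting an $A$-multiple of an element of $F' \subseteq A$ from an element of $A$ gives another element of $A$), so at each step its leading monomial is not divisible by $r$, meaning only elements of $F'$ can participate in the reduction; because $F$ is a Gr\"obner basis, the process ends with remainder $0$, expressing $h$ as a $B$-linear combination of elements of $F'$.

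The main obstacle is not substantial: once one commits to the lex-order observation, everything is essentially bookkeeping. The one subtle point is justifying that, in the division of $h \in \ia \cap A$ against the full basis $F$, the monomial matching which drives the reduction only ever picks divisors from $F'$; this is the content of the inductive statement that the partial remainders remain in $A$, and it is precisely what makes the Elimination Theorem work.
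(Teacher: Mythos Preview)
The paper does not supply its own proof of this statement: Theorem~\ref{thm:elim} is quoted directly from \cite[Theorem~4.3.11 and the discussion which follows]{CLO-book1e3}, with no argument given. So there is nothing in the paper to compare against.

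That said, your proof is correct and is essentially the standard argument from the cited textbook. The set-theoretic identity $\ia \cap A = I \cap (u)$ via the two evaluation maps $r \mapsto 0$ and $r \mapsto 1$ is exactly the usual trick, and your treatment of the Gr\"obner-basis half is the proof of the Elimination Theorem specialized to eliminating the single variable $r$. One small slip: in the final sentence you say the division expresses $h$ as a ``$B$-linear combination of elements of $F'$,'' but your own inductive argument actually shows more, namely that all the quotient terms $\lt(p)/\lt(g_i)$ lie in $A$, so the combination is $A$-linear. This stronger conclusion is what you need, since ``generated'' here means generated as an ideal of $A$; fortunately your argument already delivers it.
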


\begin{construction}\label{cons}
Let $k$ be an arbitrary field, let $m \in \N$ such that $m\geq 4$, and let $n=2m+1$. We also impose the condition that if $p$ is the characteristic of $k$, then $p \nmid m$, which is automatically satisfied if $p=0$. Let $A := k[s,x,y]$, $\m := (s,x,y) \subseteq A$, $g=xy(x-y)(x+y-sy)$, and $\ie := (x^n, y^n, g) \subseteq A$.  Let $f := \sum_{j=2}^{n-1} (-1)^j x^{n+1-j} y^j$.  Let $\ih := \ie + (f)$.  Let $\ib := (x,y)^{n+2}$.  Then we will show the following: \begin{enumerate}
\item $\ib \subseteq \ie$,
\item $sf \in \ie$ (hence, $\ih \subseteq (\ie :s)$),
\item $xf, yf \in \ie$ (hence, $\m \subseteq (\ie :f)$),
\item $f \notin \ie$ (hence, $(\ie:f) \neq A$, so that $\m = (\ie :f)$),
\item $\ih$ is $s$-saturated  (that is, $(\ih : s) = \ih$),
\item $\ie : \m^\infty = \ie : s^\infty = \ih$, and
\item $H^0_\m(A/\ie) \cong A/\m$.
\end{enumerate}

To see (1), take a typical monomial generator $x^i y^j$ of $\ib$.  That is, $i+j = n+2$.  Since $x^n, y^n \in \ie$, we may assume that $1\leq j \leq n-1$, so that $i\geq 3$.  Note that modulo $g$, we have \[
x^3y \equiv s x^2 y^2 - (s-1) x y^3.
\]
Multiplying this by $x^{i-3} y^{j-1}$, we have $x^i y^j \in (x^{i-1} y^{j+1}, x^{i-2} y^{j+2}, g)$.  Then apply induction to obtain $x^i y^j \in (x^2 y^n, x y^{n+1}, g) \subseteq \ie$.

To see (2), note that modulo $g$, we have \[
s x y^2 (x-y) \equiv xy (x^2 - y^2).
\]
Using this congruence, we have:
\begin{align*}
s(f - xy^n) &= s x y^2(x-y)\left(\sum_{j=0}^{m-1} x^{n-3-2j} y^{2j}\right) \\
&\equiv xy(x^2 -y^2)\left(\sum_{j=0}^{m-1} x^{n-3-2j} y^{2j}\right) \\
&= xy(x^{2m} - y^{2m}) = x^{n} y - x y^{n}.
\end{align*}
Thus, $sf \in (x^n, y^n, g) = \ie$, as required.

To see (3), let $t=s-1$.  Modulo $g$, we have the equivalence \[
tx y^2 (x-y) \equiv x^2 y (x-y).
\]
It follows by induction (on $i$) that for all integers $i\geq 1$, $a\geq 1$, and $b\geq i+1$, we have $t^{i} x^a y^b (x-y) \equiv x^{a+i} y^{b-i} (x-y)$ (modulo $g$).  In particular (letting $a=1$ and $b=n$), for all $1\leq i\leq n-1$, we have \[
t^i x y^n(x - y)\equiv x^{i+1}y^{n-i} (x-y).
\]
modulo $g$. Note also that $-x f+ x^{n} y^2 =yf - x^2 y^n = \sum_{j=1}^{m-1} x^{2j+1} y^{n-2j}(x-y)$.  But by the above (since $2(m-1) = n-1$), this latter sum is congruent (modulo $g$) to $y^n \cdot \left(\sum_{j=1}^{m-1} t^{2j} x (x-y) \right)$.  Thus, $-xf, yf \in (x^n, y^n, g) = \ie$, as required.

In order to demonstrate (4), we require the introduction of Gr\"obner bases into the discussion.  From now on, we will use \emph{lexicographic}\footnote{We emphasize here that we are \emph{not} using degree-lexicographic order.  So for instance, in this ordering, we have $s>x^2$.  Indeed, $s>x^{200}$.} order, with $s>x>y$.  We claim that \[
G := \{g, x^n, x^{n-1} y^3, x^{n-2} y^4, \cdots, x^3 y^{n-1}, y^{n} \}
\] is a Gr\"obner basis of $\ie$ with respect to lex order.  First, since the elements of $G$ consists of the generating set $\{g, x^n, y^n\}$ of $\ie$ along with some elements of $\ib$ (an ideal which by (1) is contained in $\ie$), it follows that $G$ is indeed a generating set for $\ie$.  To show that it is a Gr\"obner basis, we shall find $a_{ijk}$ as in Theorem~\ref{thm:Bucrit}.  But since the S-polynomial of a pair of monomials is always 0, we only need to look at the S-polynomials $S(m,g)$ for monomials $m$ of $G$.  In the following list, we represent each S-polynomial in two ways.  First, we write it in lexicographic order, and then we write it in the form given by Theorem~\ref{thm:Bucrit}:  \begin{itemize}
\item $S(y^{n}, g) = -sxy^{n+1} - x^3y^{n-1} + xy^{n+1} = (-sxy)y^{n} - 1(x^3 y^{n-1}) + (xy)y^{n}$.
\item $S(x^3 y^{n-1}, g) = -s x^2 y^{n} - x^4 y^{n-2} + x^2 y^{n} = (-s x^2) y^{n} - 1(x^4 y^{n-2}) + (x^2) y^{n}$.
\item For any $i$ with $4 \leq i \leq n-2$, we have $x^{i-1} y^{n+3-i}, x^{i+1} y^{n+1-i} \in G$.  And \begin{align*}
S(x^i y^{n+2-i}, g) &= - s x^{i-1} y^{n+3-i} - x^{i+1} y^{n+1-i} + x^{i-1} y^{n+3-i} \\
&= (-s+1) x^{i-1} y^{n+3-i} + (-1)x^{i+1}y^{n+1-i}.
\end{align*}
\item $S(x^{n-1} y^3, g) = -s x^{n-2}y^4 - x^n y^2 + x^{n-2} y^4 = (-s+1) x^{n-2} y^4 - (y^2) x^{n}$.
\item $S(x^{n}, g) = -s x^{n-1} y^3 - x^{n+1}y + x^{n-1} y^3 = (-s+1) x^{n-1} y^3 + (xy) x^{n}$.
\end{itemize}
Thus, $G$ is a Gr\"obner basis of $\ie$.  The leading term $x^{n-1} y^2$ of $f$ is manifestly not divisible by any of the leading terms of $G$, which means that the output of the division algorithm of $f$ by $G$ is $f$.  Thus, $f \notin \ie$, as required.

To demonstrate (5), we will use Theorem~\ref{thm:elim}.  Accordingly, let $B := k[r,s,x,y]$, ordered lexicographically with $r>s>x>y$, and consider the ideal $\ia := r\ih B+ (1-r) sB$ of $B$.  We claim that the entries of the following vector comprise a Gr\"obner basis of $\ia$.  Note that it ends with all the elements of $s G \cup \{sf\}$ except for $s x^{n-1}y^3$.
\[
\left[ \begin{matrix}
rs-s\\
r x^{n}\\
c := r x^3y - r x y^3 - s x^2 y^2 + s x y^3 \\
d := m r x^2 y^{n-1} + \sum_{j=1}^{n-3} (-1)^{j-1} j s x^{n-1-j} y^{j+2} \\
r y^{n} \\
-sg = s^2 x^2 y^2 - s^2 xy^3 - s x^3y + s x y^3\\
s x^{n} \\
s f = \sum_{j=2}^{n-1} (-1)^j s x^{n+1-j}y^j \\
s x^{n-2} y^4 \\
s x^{n-3} y^5 \\
\vdots \\
s x^3 y^{n-1} \\
s y^{n}
\end{matrix} \right]
\]
(This is a vector of length $n+5$, and we label the elements $F_0$ through $F_{n+4}$.) First we have to show that the ideal generated by the entries of $F$ is exactly $\ia$.  To see that $\ia \subseteq (F)$, \begin{itemize}
\item $rg = (-x^2 y^2 + x y^3) (rs-s) + 1\cdot c$, and
\item $rf = (x-y) \left(\displaystyle \sum_{j=1}^{m-1} j x^{n-3-2j} y^{2j-1}\right)c + d + mx (-r y^{n} + s y^{n})$.
\end{itemize}
To see that $F \subseteq \ia$, \begin{itemize}
\item $c = 1\cdot (rg) + (-x^2 y^2 + x y^3) (-rs+s)$,
\item $d =(m-1)(-sx+x)(ry^{n}) + (-x+y)\left(\displaystyle \sum_{j=1}^{m-1} j x^{n-3-2j} y^{2j-1}\right)(rg) + 1\cdot (rf)+ \left(\displaystyle \sum_{j=1}^{n-3} (-1)^{j-1} jx^{n-j-1}y^{j+2}\right)(-rs+s)$,
\end{itemize}
and for each element $u \in G \cup \{f\}$, we have $ru \in \ia$, so that \begin{itemize}
\item $su = s\cdot (ru) + u \cdot(-rs+s)$.
\end{itemize}
Thus, $\ia = (F)$.

Taking all the S-polynomials $S_{jk} = S(F_j, F_k)$ such that $j<k$ and $F_j$, $F_k$ are not both monomials (and note that the only non-monomials are $F_i$ for $i=0,2,3,5,7$), we may obtain the following list.  For these choices of $a_{ijk}$, the diligent reader may easily verify the conditions of Theorem~\ref{thm:Bucrit}:
\begin{itemize}
\item $S_{01} = -F_6$
\item $S_{02} = x y^3 F_0 + F_5$
\item $S_{03} = ((-x+y)\sum_{j=1}^{m-1} j x^{n-3-2j} y^{2j-1})F_5  - F_7 + (m-1)(sx -x) F_{n+4}$
\item $S_{04} = -F_{n+4}$
\item $S_{05} = (s x y^3 + x^3y-xy^3)F_0 - F_5$
\item $S_{06} = -F_6$
\item $S_{07} = (\sum_{j=3}^{n-1} (-1)^{j-1} x^{n-j+1}y^j) F_0 - F_7$
\item $S_{0i} = -F_i$, for $8\leq i \leq n+4$
\item $S_{12} = (\sum_{j=1}^{m-1} x^{n-3-2j} y^{2j}) F_2 + xF_4 + F_7 - x F_{n+4}$
\item $S_{13} = (\sum_{j=1}^{n-3} (-1)^j j x^{n-3-j} y^{j+2})F_6$
\item $S_{15} =  (\sum_{j=1}^{n-2} r x^{n-2-j} y^j)F_5  + (rxy+ry^2)F_6 + (rsxy-rx^2 - rxy)F_{n+4}$
\item $S_{17} = - sx^2 F_4+ryF_7$
\item $S_{23} = -mxyF_4 - y F_7 + (\sum_{j=1}^{n-4} (-1)^{j-1} jF_{j+7}) + ((1-m)x^2 + mxy)F_{n+4}$
\item $S_{24} = -x y^2 F_4 + (-x^2 y + x y^2) F_{n+4}$
\item $S_{25} = (s x^2 y^3 - s x y^4 + x^4 y - x^2 y^3) F_0 + (-sy-x)F_5$
\item $S_{26} = -s(\sum_{j=1}^{m-1} x^{n-3-2j}y^{2j})F_2 - sxF_4 - sF_7 + sxF_{n+4}$
\item $S_{27} = (s x^{n-5} y^2 + (-x+y)\sum_{j=2}^{m-1} jsx^{n-3-2j}y^{2j-1})F_2 - s F_3 + (m-1)(rx-sx)F_{n+4}$
\item $S_{2i} =  -sF_{i+1} + (-r+s) F_{i+2}$, for $8\leq i \leq n+1$
\item $S_{2,n+2} = -sF_{n+3} + (-r+s)x^2 F_{n+4}$
\item $S_{2,n+3} = (-rxy-sx^2+sxy)F_{n+4}$
\item $S_{2,n+4} = (-rxy^2 - sx^2y + s xy^2)F_{n+4}$
\item $S_{34} = (\sum_{j=1}^{n-4}(-1)^{j-1} j F_{j+7}) - (n-3) x^2 F_{n+4}$
\item $S_{35} = m(sxy^n + x^3 y^{n-2} - xy^n)F_0 + [s(x-y)(\sum_{j=1}^{m-1} j x^{n-3-2j} y^{2j-1}) - m y^8] F_5 +sF_7 + (m-1) (-s^2 x + s x) F_{n+4}$
\item $S_{36} = (\sum_{j=1}^{n-3} (-1)^{j-1} j x^{n-j-3}y^{j+2}) F_6$
\item $S_{37} = ms(\sum_{j=0}^{n-4} (-1)^j x^{n-2-j}y^j) F_4 + (\sum_{j=1}^{n-4} (-1)^{j-1} jsx^{n-4-j}y^{j+2})F_6 - (n-3)sxy^{n-5} F_8$
\item $S_{3i} = x^{n+3-i} (sy F_7 + s(\sum_{j=1}^{n-4} (-1)^j jF_{j+7}) - sx^2 F_{n+4})$ for $8\leq i\leq n+3$
\item $S_{3,n+4} = (\sum_{j=1}^{n-4} (-1)^{j-1}js F_{j+7}) - (n-3) sx^2 F_{n+4}$
\item $S_{45} = (s^2 xy-sxy) F_4 + r F_{n+3}$
\item $S_{47} = (\sum_{j=1}^{n-3} (-1)^{j-1} s x^{n-1-j} y^j) F_4$
\item $S_{56} = -(\sum_{j=1}^{n-2} x^{n-2-j} y^j)F_5 -(xy+y^2)F_6 + (-sxy+x^2+xy)F_{n+4}$
\item $S_{57} = -(\sum_{j=1}^{m-1} x^{n-3-2j} y^{2j}) F_5 -yF_6+(-sx+x)F_{n+4}$
\item $S_{58} = -yF_7 - F_8 - (s+2)F_9 + (\sum_{j=10}^{n+3} (-1)^{j-1}F_j)+ x^2 F_{n+4}$
\item $S_{5i} = - F_{i-1} + (-s+1)F_{i+1}$ for $9\leq i\leq n+2$
\item $S_{5,n+3} =  -F_{n+2}+ (-s+1)x^2 F_{n+4}$
\item $S_{5,n+4} = - F_{n+3} + (-s+1)xyF_{n+4}$
\item $S_{67} = y F_7 - x^2 F_{n+4}$
\item $S_{7i} = xy^{i-8} ((\sum_{j=9}^{n+3} (-1)^j F_j) + (-x^2+xy)F_{n+4})$ for $8\leq i\leq n+3$
\item $S_{7,n+4} = (\sum_{j=1}^{n-3} (-1)^j x^{n-1-j} y^j)F_{n+4}$
\end{itemize}
Hence, the entries of $F$ give a Gr\"obner basis of $\ia$. By Theorem~\ref{thm:elim}, it follows that the elements of $F$ whose leading term does not involve $r$ forms a generating set for the ideal $\ih \cap (s)$ of $A$.  That is, $\ih \cap (s) = (s y^{n}, s x^3 y^4 (x,y)^{n-5}, s f, s x^{n}, sg)$.  Dividing by $s$, we get $(\ih : s) = (y^{n}, x^3 y^4(x,y)^{n-5}, f, x^{n}, g) = \ih$ (since $x^3 y^4 (x,y)^{n-5} \subseteq (x,y)^{n+2} = \ib \subseteq \ih$), as required. 

To see (6), first note that $\ie : \m^\infty = \ie : s^\infty$, since $\ie$ contains powers of both $x$ and $y$.  But $\ie \subseteq \ih$, so  from (2) and (5), we have $\ih \subseteq (\ie:s) \subseteq (\ih:s) = \ih$, whence all are equalities.  Thus, $(\ie : s^\infty) = (\ih : s^\infty) = \ih$, as required.

Finally, to see (7), it follows from (6) and (4) that \[
H^0_\m(A/\ie) = \frac{\ie : \m^\infty}{\ie} = \frac{\ih}{\ie} = \frac{\ie + (f)}{\ie} \cong \frac{A}{(\ie :f)} = A/\m.
\]
\end{construction}

\begin{example}
Let $p$ be an odd prime number.  Let $k$ be a field of characteristic $p$, and $R := k[s,x,y] / (xy (x-y)(x+y-sy))$.  This is the ring used by Katzman in \cite{KaUAss}, with variable change given by $s=t+1$.

Consider the ideals $J := (x^p, y^p)$ and $I=(x,y)^p$ of $R$.  As shown in Katzman's paper, $J^* = I$.  Now fix a power $q=p^e$ of $p$, $e\geq 1$, and let $n=pq$ in Construction~\ref{cons}.  Let $\ib$, $\ie$, $A$, $\m$, $g$, $\ih$, and $f$ be as in that construction.  The conditions of the construction are satisfied, since $p\geq 3$, whence $n= pq \geq 9$, and $p$ can never divide $(pq-1)/2$.  Then $R=A/(g)$ and $J^{[q]} = \ie/(g) \subseteq R$.  In particular, letting $z$ be the image of $f$ in $R$, we have $z\notin J^{[q]}$.  

However, we claim that $z\in I^{[q]}$.  To see this, it is enough to show (in the ring $R$ -- that is, modulo $g$) that for all $j=2, 3, \dotsc, pq-1$, we have $x^j y^{pq+1-j} \in (x^q, y^q)^p$.   For $j=q, q+1$ this is clear, and for $j\geq q+2$, the assertion follows from the equation $x^j y^{pq+1-j} = (t+1) x^{j-1} y^{pq-j+2} - t x^{j-2} y^{pq-j+3}$, along with induction, showing that in these cases, $x^j y^{pq+1-j} \in (x^q y^{pq-q})$. For $2\leq j \leq q-1$, we have \begin{align*}
x^j y^{pq+1-j} 
&= x^{q+1} y^{pq-q} - \sum_{i=1}^{j-2} x^{q+i} y^{pq-q-i}(y-x) - x^j y^{pq-q+2-j}(x^{q-1}-y^{q-1}) \\
&= x^{q+1} y^{pq-q} - \sum_{i=1}^{j-2} t^i x^{q} y^{pq-q}(y-x) - t^{j-1}x y^{pq-q+1}(x^{q-1}-y^{q-1}) \\
&= x^{q+1} y^{pq-q} - \sum_{i=1}^{j-2} t^i x^{q} y^{pq-q}(y-x) - t^{j-1}x^q y^{pq-q+1}
   + t^{j-1}x y^{pq} \\
& \in (x^q, y^q)^p = I^{[q]}.
\end{align*}

Let $\p := (x,y)$.  We claim that  $\p \in \Ass_R(I^{[q]}/ J^{[q]})$.  Since $\p$ is minimal over $J^{[q]}$, it suffices to show that $I^{[q]}_\p/ J^{[q]}_\p = (I^{[q]} / J^{[q]})_\p \neq 0$.  To do this, it suffices to show that $((x^{pq},y^{pq}) +(g))A_P$ is properly contained in $((x^q, y^q)^p + (g))A_P$, where $P := (x,y) \subseteq A$.  But in the ring $C := L[x,y]$ (where $L:=k(s)$, the fraction field of $k[s]$), the ideal $(x^{pq},y^{pq}, g)C$ is primary to $P' = (x,y)C$, which is a maximal ideal of $C$.  So to show that $((x^q, y^q)^p + (g))C_{P'} / ((x^{pq},y^{pq}) +(g))C_{P'} = ((x^q, y^q)^p + (g) / (x^{pq}, y^{pq},g))_{P'}$ is nonzero over $A_P = C_{P'}$, it suffices to show that the $C$-module $((x^q, y^q)^p + (g))C / (x^{pq},y^{pq}, g)C \neq 0$.  For this, it is enough to show that $x^q y^{(p-1)q} \notin \ic := (x^{pq},y^{pq}, xy(x-y))C$, since $xy(x-y)$ is a factor of $g$.  Suppose that $x^q y^{(p-1)q} \in \ic$.  Then there exist polynomials $a,b,c \in C$ such that \[
x^q y^{(p-1)q} = a x^{pq} + b y^{pq} + cxy(x-y).
\]
From degree considerations (taking the homogeneous degree $pq$-part of the above equation), we may assume that $a,b \in L$.  Then making the substitution ($x=1$, $y=0$) in the displayed equation yields $a=0$, whereas the substitution ($x=0$, $y=1$) yields $b=0$.   So $x^q y^{(p-1)q} = cxy(x-y)$.  But then the substitution $x=y=1$ leads to the conclusion that $1=0$, a manifest contradiction.  Hence $x^q y^{(p-1)q} \notin \ic$, so that  $\p \in \Ass_R(I^{[q]}/ J^{[q]})$, as required.

We also know from Construction~\ref{cons} that $\m = (J^{[q]} : z)$, so that since $z\in I^{[q]}$, we have $\m \in \Ass_R(I^{[q]}/J^{[q]})$ as well.  So we are in the ``critical situation'' described at the beginning of this Section.

Moreover, \[
\len_R(H^0_\m(I^{[q]}/J^{[q]})) \leq \len_A(H^0_\m(A/\ie)) = \len_A(A/\m) = 1,
\]
a constant, which shows that $\rjj{R_\m}{J_\m}{I_\m} = 0$, since $\dim R/J = 1>0$.  Hence, the expected converse to Theorem~\ref{thm:zerotc} holds for this specific example.
\end{example}

\section{Variants of relative multiplicity}\label{sec:var}

The proof of Theorem~\ref{thm:tczero-fg} suggests a slight variant on Definition~\ref{def:rjj}.  Namely:

\begin{defn}\label{def:sjj}
For modules $L \subseteq M \subseteq N$ over a local ring $(R,\m)$ of dimension $d$, we set \[
\sjjp NLM := \limsup_{q \ra \infty} \frac{\len([\Gamma_\m(M/L)]^{[q]}_{N/L})}{q^d}
\] and \[
\sjjm NLM := \liminf_{q \ra \infty} \frac{\len([\Gamma_\m(M/L)]^{[q]}_{N/L})}{q^d}.
\]
If these are equal (\emph{i.e.}, the limit is well-defined), then the common number is written $\sjj NLM$.
\end{defn}

We note that $\rjjm NLM$ (resp. $\rjjp NLM$) is an upper bound for $\sjjm NLM$ (resp. $\sjjp NLM$).  Also, this new concept has the advantage of being \emph{bounded above}:

\begin{lemma}
If $M/L$ is finitely generated, then $\sjjp NLM < \infty$.
\end{lemma}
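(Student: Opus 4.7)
The plan is to reduce $\len\bigl([\Gamma_\m(M/L)]^{[q]}_{N/L}\bigr)$ to $\len(F^e(H))$, where $H := \Gamma_\m(M/L)$, and then to bound the latter by using a composition series together with Monsky's theorem applied to $\m$ itself.

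First, since $M/L$ is finitely generated, $H := \Gamma_\m(M/L)$ is a submodule of $M/L$ annihilated by some power of $\m$, hence of finite length; set $\ell := \len(H)$. By the very definition of the Frobenius bracket power on submodules, $[\Gamma_\m(M/L)]^{[q]}_{N/L}$ is the image of the natural map $F^e(H) \ra F^e(N/L)$ induced by the inclusion $H \hookrightarrow N/L$, so for every $q$
\[
\len\bigl([\Gamma_\m(M/L)]^{[q]}_{N/L}\bigr) \le \len(F^e(H)).
\]

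Next, I would exploit the right-exactness of $F^e = (-) \otimes_R {}^eR$. Choosing any composition series $0 = H_0 \subsetneq H_1 \subsetneq \cdots \subsetneq H_\ell = H$ with each $H_j/H_{j-1} \cong R/\m$, an induction on $\ell$ using right-exactness gives
\[
\len(F^e(H)) \le \ell \cdot \len(F^e(R/\m)) = \ell \cdot \len(R/\m^{[q]}),
\]
where the final equality is the standard identification $F^e(R/\m) \cong R/\m^{[q]}$, obtained by applying $F^e$ to $0 \ra \m \ra R \ra R/\m \ra 0$.

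Finally, Monsky's theorem says that $\len(R/\m^{[q]})/q^d$ converges to the finite number $\eHK(\m)$, and in particular is bounded above by a constant independent of $q$. Combining the two displayed inequalities and passing to $\limsup_{q \ra \infty}$ yields $\sjjp NLM \le \ell \cdot \eHK(\m) < \infty$. I foresee no serious obstacle to this plan; its entire content is Monsky's finiteness theorem, packaged through a one-step composition-series bound that uses only right-exactness of the Frobenius functor.
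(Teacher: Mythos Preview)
Your argument is correct and follows the same overall structure as the paper's proof: both bound $\len([\Gamma_\m(M/L)]^{[q]}_{N/L})$ above by $\len(F^e(H))$ for the finite-length module $H=\Gamma_\m(M/L)$, and then invoke an existing result to control $\len(F^e(H))/q^d$. The only difference is in that last step. The paper simply cites Seibert's theorem that $\lim_{q\to\infty}\len(F^e(Z))/q^d$ exists and is finite for any finite-length module $Z$, whereas you unpack this via a composition series to reduce to the special case $Z=R/\m$, which is Monsky's theorem. Your route is slightly more elementary and gives the explicit bound $\sjjp NLM \le \len(H)\cdot \eHK(\m)$; the paper's citation of Seibert is shorter and in fact yields the stronger conclusion that the limit (not just the $\limsup$) exists, though that extra information is not needed here.
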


\begin{proof}
Let $H$ be the submodule of $M$ such that $H/L = \Gamma_\m(M/L)$.  Then since $H/L$ has finite length and $H^{[q]}_N / L^{[q]}_N$ is the image of the map $F^e(H/L) \ra F^e(N/L)$, we have \[
\limsup_{q\ra \infty} \frac{\len(\Gamma_\m(M/L)^{[q]}_{N/L})}{q^d} \leq \lim_{q \ra \infty} \frac{\len(F^e(H/L))}{q^d}
\]
But since $H/L$ has finite length, this last limit is well-defined and finite by Seibert~\cite[Proposition 2]{Sei-cx}.
\end{proof}

In preparation for the next theorem, we define two more slight variants of relative multiplicity:
\begin{defn}
For modules $L \subseteq M \subseteq N$ over a local ring $(R,\m)$ of dimension $d$, we set \[
\tjjp NLM := \sup \{\rjjp N T M \mid L \subseteq T \subseteq M \text{ and } \len(M/T)<\infty\}
\] and \[
\tjjm NLM := \sup \{\rjjm N T M \mid L \subseteq T \subseteq M \text{ and } \len(M/T)<\infty\}.
\] Finally, we set \[
\ujjm NLM := \sjjm N {L^*_N \cap M} M.
\]
\end{defn}

\begin{thm}\label{thm:var}
Let $R$ be a Noetherian ring of prime characteristic $p>0$, and let $L \subseteq M \subseteq N$ be finitely generated $R$-modules.  Consider the following conditions: \begin{enumerate}[label=(\alph*)]
\item $M \subseteq L^*_N$.
\item $\tjjp {N_\p} {L_\p} {M_\p} = 0$ for all $\p \in \Spec R$.
\item $\tjjm {N_\p}{L_\p}{M_\p} = 0$ for all $\p \in \Spec R$.
\item $\ujjm {N_\p}{L_\p}{M_\p} = 0$ for all $\p$ such that $\len(M_\p / ((L_\p)^*_{N_\p} \cap M_\p)) < \infty$.
\end{enumerate}
Then (a) $\implies$ (b) $\implies$ (c) $\implies$ (d).  If, moreover, $R$ contains a completely stable weak test element, $\widehat{R_\p}$ is reduced and equidimensional for all $\p \in \Spec R$, and tight closure commutes with localization for the pair $L \subseteq N$, then (d) $\implies$ (a).
\end{thm}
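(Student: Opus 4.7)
I would dispatch the first three implications quickly, since each reduces to manipulations already in hand. For (a)$\Rightarrow$(b), fix $\p$ and note that tight closure persists under localization, so $M_\p\subseteq(L_\p)^*_{N_\p}$; for any $T$ with $L_\p\subseteq T\subseteq M_\p$ and $\len(M_\p/T)<\infty$, also $M_\p\subseteq T^*_{N_\p}$, and Theorem~\ref{thm:HHlen}(a) gives $\len((M_\p)^{[q]}_{N_\p}/T^{[q]}_{N_\p})=O(q^{\dim R_\p-1})$, so the $\Gamma$-truncated length has the same bound and $\rjjp{N_\p}{T}{M_\p}=0$; taking the sup over $T$ yields (b). Step (b)$\Rightarrow$(c) is just $\liminf\leq\limsup$. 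For (c)$\Rightarrow$(d), the specific choice $T':=(L_\p)^*_{N_\p}\cap M_\p$ satisfies $L_\p\subseteq T'\subseteq M_\p$ with finite colength by hypothesis, so it is admissible in the supremum defining $\tjjm{N_\p}{L_\p}{M_\p}$, giving $\rjjm{N_\p}{T'}{M_\p}\leq\tjjm{N_\p}{L_\p}{M_\p}=0$; the inequality $\sjjm{\cdot}{\cdot}{\cdot}\leq\rjjm{\cdot}{\cdot}{\cdot}$ noted after Definition~\ref{def:sjj} then forces $\ujjm{N_\p}{L_\p}{M_\p}=0$.

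\textbf{The main implication (d)$\Rightarrow$(a).} Assume the extra hypotheses and suppose for contradiction that $M\nsubseteq L^*_N$. Set $T:=L^*_N\cap M$, so $M/T\neq 0$, and pick $\p$ minimal in $\Supp_R(M/T)$. Using the localization hypothesis for $L\subseteq N$, $T_\p=(L^*_N)_\p\cap M_\p=(L_\p)^*_{N_\p}\cap M_\p$; by minimality of $\p$ the finitely generated $R_\p$-module $M_\p/T_\p$ is supported only at $\p R_\p$, hence has finite length. Thus $\p$ lies in the set of primes considered by (d), forcing $\ujjm{N_\p}{L_\p}{M_\p}=\sjjm{N_\p}{T_\p}{M_\p}=0$. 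Since $M_\p/T_\p$ is already $\p R_\p$-torsion, $\Gamma_{\p R_\p}(M_\p/T_\p)=M_\p/T_\p$, and right-exactness of Frobenius gives $[M_\p/T_\p]^{[q]}_{N_\p/T_\p}=(M_\p)^{[q]}_{N_\p}/(T_\p)^{[q]}_{N_\p}$; so $\sjjm{N_\p}{T_\p}{M_\p}=0$ is precisely the liminf hypothesis of Theorem~\ref{thm:HHlen}(b) applied to $T_\p\subseteq M_\p\subseteq N_\p$ over $R_\p$. The remaining hypotheses of that theorem (completely stable weak test element and $\widehat{R_\p}$ reduced and equidimensional) transfer from our global assumptions, so $M_\p\subseteq(T_\p)^*_{N_\p}$. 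Finally, idempotence of tight closure yields $(T_\p)^*_{N_\p}\subseteq((L_\p)^*_{N_\p})^*_{N_\p}=(L_\p)^*_{N_\p}$, so $M_\p\subseteq(L_\p)^*_{N_\p}$, contradicting $\p\in\Supp(M/T)$.

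\textbf{Main obstacle.} The crucial observation is that $\sjjm$, in the case when $M_\p/T_\p$ already has finite length, collapses to precisely the quantity controlled by Theorem~\ref{thm:HHlen}(b); once this identification is made and the right prime (a minimal element of $\Supp(M/(L^*_N\cap M))$) is chosen, the argument is a single application of that theorem. The delicate point is the identification $T_\p=(L_\p)^*_{N_\p}\cap M_\p$, which is exactly where the hypothesis that tight closure commutes with localization for the pair $L\subseteq N$ must be invoked; without it, one would only know the containment $T_\p\subseteq(L_\p)^*_{N_\p}\cap M_\p$ and the cleanup step equating $(T_\p)^*_{N_\p}$ with $(L_\p)^*_{N_\p}$ would break down.
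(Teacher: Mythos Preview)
Your proof is correct and follows essentially the same approach as the paper's: the easy implications are handled the same way, and for (d)$\Rightarrow$(a) both arguments pass to $T=L^*_N\cap M$, pick a minimal prime $\p$ of $M/T$, use the localization hypothesis to identify $T_\p=(L_\p)^*_{N_\p}\cap M_\p$, and then invoke Theorem~\ref{thm:HHlen} for the finite-colength pair $T_\p\subseteq M_\p$. The only cosmetic difference is that the paper phrases the last step as exhibiting a prime with $\ujjm{N_\p}{L_\p}{M_\p}>0$ directly (i.e.\ proving $\neg$(a)$\Rightarrow\neg$(d)), whereas you assume (d) holds, deduce $M_\p\subseteq(T_\p)^*_{N_\p}\subseteq(L_\p)^*_{N_\p}$, and derive a contradiction; these are logically identical.
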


\begin{proof}
To see that (a) $\implies$ (b), note that for any module $T$ between $L$ and $M$, we have $M_\p \subseteq (T^*_N)_\p \subseteq (T_\p)^*_{N_\p}$, so that when $\len(M_\p / T_\p) < \infty$ over $R_\p$, Theorem~\ref{thm:HHlen} shows that $\rjjp {N_\p}{T_\p}{M_\p} = 0$.  Since this holds for all such $T$, we have $\tjjp {N_\p}{L_\p}{M_\p} = 0$.  We have (b) $\implies$ (c) $\implies$ (d) by the definitions.

We need only show that the additional conditions require that (d) $\implies$ (a).  We will prove the contrapositive.  That is, suppose that $M \nsubseteq L^*_N$.  Then let $T := L^*_N \cap M$.  Since $T \subsetneq M$, there is some minimal prime $\p$ of $M/T$, which means that $0<\len(M_\p / T_\p)< \infty$ as $R_\p$-modules.  Moreover, $T_\p = (L^*_N)_\p \cap M_\p = (L_\p)^*_{N_\p} \cap M_\p$.  Since $M_\p \nsubseteq (L^*_N)_\p = (L_\p)^*_{N_\p} \supseteq (T_\p)^*_{N_\p}$, it follows that $M_\p \nsubseteq (T_\p)^*_{N_\p}$.  Then \[
\ujjm {N_\p}{L_\p}{M_\p} = \rjjm {N_\p}{T_\p}{M_\p} = \liminf_{q\ra \infty} \frac{\len((M_\p)^{[q]}_{N_\p} / (T_\p)^{[q]}_{N_\p})}{q^{\height \p}} >0,
\]
where the last inequality is by Theorem~\ref{thm:HHlen}.
\end{proof}

We think of this theorem as an avatar of the fact \cite[Proposition 6.1]{HHmain} that every tightly closed ideal is an intersection of finite colength tightly closed ideals.

It is natural to ask whether Definition~\ref{def:rjj} and Definition~\ref{def:sjj} are equivalent.  In fact they are not:

\begin{prop}\label{prop:reg}
Let $R$ be a Noetherian ring of prime characteristic $p>0$.  Consider the following conditions: \begin{enumerate}[label=(\alph*)]
\item $R$ is regular.
\item $\rjj {N_\p}{L_\p}{M_\p} = \sjj {N_\p}{L_\p}{M_\p}$ for all submodule inclusions $L \subseteq M \subseteq N$ and all $\p \in \Spec R$.
\item $\rjj {N_\p}{0}{N_\p} = \sjj{N_\p} 0 {N_\p}$ for all finite $R$-modules $N$ and all $\p \in \Spec R$.
\end{enumerate}
Then (a) $\implies$ (b) $\implies$ (c), and if $R$ is reduced and has finite F-representation type then (c) $\implies$ (a).
\end{prop}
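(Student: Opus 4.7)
For (a) $\Rightarrow$ (b), I would invoke Kunz's theorem: regularity is local, so $R_\p$ is regular and the Frobenius functor $F^e$ is exact on $R_\p$-modules. Setting $A := M_\p/L_\p$, exactness identifies $(M_\p)^{[q]}_{N_\p}/(L_\p)^{[q]}_{N_\p}$ with $F^e(A)$ and $[\Gamma_{\m_\p}(A)]^{[q]}_{N_\p/L_\p}$ with $F^e(\Gamma_{\m_\p}(A))$, so the whole implication reduces to the pointwise identity $\Gamma_{\m_\p}(F^e(A)) = F^e(\Gamma_{\m_\p}(A))$. I would verify this by applying the exact $F^e$ to the tautological sequence $0 \to \Gamma_{\m_\p}(A) \to A \to A/\Gamma_{\m_\p}(A) \to 0$ and invoking the depth formula for the flat local map $F^e: R_\p \to R_\p$ (whose closed fiber $R_\p/\m_\p^{[q]}$ has depth $0$): this preserves $\depth_{R_\p}$ and hence the property $\m_\p \notin \Ass(\cdot)$, so $\Gamma_{\m_\p}$ of the twisted quotient vanishes and left exactness of $\Gamma_{\m_\p}$ forces the equality. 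The implication (b) $\Rightarrow$ (c) is immediate by specializing to $L = 0$, $M = N$.

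For (c) $\Rightarrow$ (a) under the FFRT and reducedness hypotheses, I argue the contrapositive. Assume $R$ is not regular, localize at a prime in the (nonempty) singular locus, and complete; reducedness, FFRT, and non-regularity persist, so I may take $(R,\m)$ complete local, reduced, FFRT, not regular. A zero-dimensional reduced local ring is a field (hence regular), so $\dim R \geq 1$, and reducedness gives $\Ass(R) = \Min(R)$, hence $\Gamma_\m(\m) \subseteq \Gamma_\m(R) = 0$. Write ${}^e R \cong \bigoplus_{i=1}^s M_i^{c_{i,e}}$ via FFRT with F-contributors $M_1, \dotsc, M_r$, and split on whether any F-contributor fails to be free.

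If some F-contributor $M_j$ is non-free, I test (c) at $N := \m$. Then $\Gamma_\m(\m) = 0$ gives $\sjj{\m}{0}{\m} = 0$, while the calculation preceding Theorem~\ref{thm:tczero-FFRT} yields $\rjj{\m}{0}{\m} = \sum_{i=1}^r \ell_i \len(\Gamma_\m(\m \otimes_R M_i))$. Tensoring $0 \to \m \to R \to k \to 0$ with $M_j$ produces an injection $\Tor_1^R(k, M_j) \hookrightarrow \m \otimes_R M_j$, and $\Tor_1^R(k, M_j) \ne 0$ since $M_j$ is a non-free finite module over a local ring. As a $k$-vector space this Tor is $\m$-torsion, so $\rjj{\m}{0}{\m} \geq \ell_j \len(\Tor_1^R(k, M_j)) > 0$, contradicting (c). Otherwise every F-contributor is free, forcing $r = 1$ and $M_1 \cong R$; a rank count along the FFRT decomposition (together with $\ell_i = 0$ for $i \geq 2$) then pins $\ell_1 = 1$, i.e., $R$ has F-signature $1$. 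By the theorem of Huneke--Leuschke (also Aberbach--Leuschke, Yao) that characterizes regularity via F-signature, $R$ is regular---contradiction.

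The main obstacle is this second case: when $R$ is the sole F-contributor, $\rjj{N}{0}{N}$ and $\sjj{N}{0}{N}$ both collapse to $\ell_1 \len(\Gamma_\m(N))$ for every $N$, so (c) is automatic and cannot by itself detect non-regularity. Excluding this case requires the external input that F-signature $1$ characterizes regularity among F-finite local rings.
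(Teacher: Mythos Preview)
Your argument is correct, and for (a)$\Rightarrow$(b) it coincides with the paper's (both reduce to the commutation of $F^e$ with $\Gamma_\m$ over a regular local ring, which you justify via Kunz and preservation of depth under the flat local Frobenius). For (c)$\Rightarrow$(a), however, you take a genuinely different route. The paper exploits \emph{all} of hypothesis~(c): from the equality $\rjj{N_\p}0{N_\p}=\sjj{N_\p}0{N_\p}$ for every finite $N$ and every prime $\p$ it extracts the containment $\Ass(N\otimes M_i)\subseteq\Ass N$ for each F-contributor $M_i$, and then invokes the authors' separate flatness criterion \cite{nmeYao-flat} (an $R$-module $V$ over a reduced ring with $\Ass(N\otimes V)\subseteq\Ass N$ for all finite $N$ is flat) to force every $M_i$ to be free. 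You instead test (c) at a \emph{single} module $N=\m$ and a single (singular) prime, and detect a non-free F-contributor directly via the injection $\Tor_1^R(k,M_j)\hookrightarrow \m\otimes M_j$ together with $\Gamma_\m(\m)=0$. Your approach is more elementary and self-contained---it avoids the external flatness criterion entirely and uses strictly less of the hypothesis; in fact it implicitly sharpens the paper's Remark following the proposition, since you only need $N$ to range over maximal ideals localized at themselves. The paper's approach, by contrast, packages the argument into a clean module-theoretic statement of independent interest. Both arguments converge once the F-contributors are shown to be free: one lands in the situation $r=1$, $M_1=R$, and must conclude via Huneke--Leuschke that F-signature~$1$ forces regularity. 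One small point: your ``rank count'' for $\ell_1=1$ tacitly uses that the generic rank of ${}^eR$ equals $(ap^d)^e$, which is cleanest over a domain; the paper sidesteps this by computing $\eHK(\m)=\ell_1$ directly from the FFRT formula (giving $\ell_1\geq 1$) and then citing the Huneke--Leuschke bound $\ell_1\leq 1$.
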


\begin{proof}
(a) $\implies$ (b) because when $R$ is regular, the functors $F^e$ and $H^0_\m$ commute with each other.  Obviously (b) $\implies$ (c).

So suppose (c) holds, and assume $R$ is reduced and has FFRT.  We want to show $R_\m$ is regular for all maximal ideals $\m$, so we may replace $R$ by $R_\m$ for a maximal ideal $\m$, and let $d = \dim R = \height \m$.  We adopt the notation and terminology from Theorem~\ref{thm:Yao}.  For a fixed finite $R$-module $N$ and prime ideal $\p$, we have \[
\rjj {N_\p}0{N_\p} = \sum_{i=1}^r \len_{R_\p} (H^0_{\p R_\p} (N_\p \otimes (M_i)_\p)) \cdot \ell_i,
\] whereas \[
\sjj {N_\p}0{N_\p} = \sum_{i=1}^r \len_{R_\p} (\im ( H^0_{\p R_\p}(N_\p) \otimes (M_i)_\p \longrightarrow N_\p \otimes (M_i)_\p )) \cdot \ell_i.
\]
The fact that these are equal amounts to saying that $\len_{R_\p} (H^0_{\p R_\p} (N_\p \otimes (M_i)_\p)) = \len_{R_\p} (\im ( H^0_{\p R_\p}(N_\p) \otimes (M_i)_\p \ra N_\p \otimes (M_i)_\p ))$ for each $1 \leq i \leq r$. In particular, if $\p \in \Ass(N \otimes M_i)$, then $\len_{R_\p} (H^0_{\p R_\p} (N_\p \otimes (M_i)_\p))  \neq 0$, whence $\im ( H^0_{\p R_\p}(N_\p) \otimes (M_i)_\p \ra N_\p \otimes (M_i)_\p ) \neq 0$, which implies that $H^0_{\p R_\p}(N_\p) \neq 0$, so that $\p \in \Ass N$.

That is, for all finitely generated $R$-modules $N$ and all F-contributors $M_i$, we have $\Ass (N \otimes M_i) \subseteq \Ass N$.  But the authors have shown in \cite{nmeYao-flat} that any $R$-module $V$ such that $\Ass (N \otimes V) \subseteq \Ass N$ for all finite $N$ must be flat, provided that $R$ is reduced.  Thus, each $M_i$ is flat, hence (since they are finitely generated) free.  Thus, we may arrange it (by re-grouping the summands) so that \emph{$R$ is the only F-contributor!}  That is, $r=1$ and $M_1 = R$.

Hence, \begin{align*}
1 &\leq \eHK(\m) = \lim_{q \ra \infty} \frac{\len(R/\m^{[q]})}{q^d} = \lim_{e \ra \infty} \frac{\len((R/\m) \otimes_R {}^eR)}{(a p^d)^e}\\
&= \sum_{i=1}^r \left(\lim_{e \ra \infty} \frac{c_{i,e}}{(a p^d)^e}\right) \cdot \len(R/\m \otimes_R M_i) = \ell_1 \cdot \len(R/\m) = \ell_1.
\end{align*}
However, note that $\ell_1$ is the \emph{F-signature} of the ring $R$ (cf. Huneke and Leuschke \cite{HunLeu-2t}), so that by \cite[Theorem 11 and Proposition 14]{HunLeu-2t}, $\ell_1 \leq 1$.  Thus, $\ell_1 = 1$, and then \cite[Corollary 16]{HunLeu-2t} shows that $R$ is regular.
\end{proof}

\begin{rmk*}
In~\cite{nmeYao-flat}, the authors have in fact shown that if $R$ is reduced and $V$ is an $R$-module, then $V$ is flat if and only if $\Ass(Q \otimes V) \subseteq \Ass Q$ for all \emph{prime ideals} $Q$.  Given this, the proof of Proposition~\ref{prop:reg} yields a stronger result.  Namely, if $R$ is a reduced Noetherian ring of prime characteristic and finite F-representation type, then $R$ is regular if and only if $\rjj {Q_\p}{0}{Q_\p} = \sjj{Q_\p} 0 {Q_\p}$ for all $\p, Q \in \Spec R$.
\end{rmk*}

\section{A numerical criterion based on a Nakayama-type lemma}\label{sec:Nak*}
If all we wanted to do was to get a numerical criterion determining exactly when two modules have the same tight closure, it already exists, in view of the following result of the first named author, for which we provide a new, slightly simplified proof here in order to make the paper more self-contained:

\begin{prop}[Nakayama lemma for tight closure]\cite[Corollary 3.2]{nmepdep}\label{pr:Nak}
Let $(R,\m)$ be a Noetherian local ring of prime characteristic $p>0$ which possesses a weak test element (\emph{e.g.} this holds whenever $R$ is excellent \cite[Theorem 6.1a]{HHbase}).  Let $L \subseteq M \subseteq N$ be finitely generated $R$-modules such that $L \subseteq M \subseteq (L + \m M)^*_N$.  Then $M \subseteq L^*_N$.
\end{prop}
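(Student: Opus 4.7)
The plan is to reduce first to the case $L = 0$ by passing to the quotient $N/L$, so the hypothesis becomes $M \subseteq (\m M)^*_N$ and I need to deduce $M \subseteq 0^*_N$. Fix a $q_0$-weak test element $c$ and an element $x \in M$; the aim is to prove $c x^q_N = 0$ for all $q \geq q_0$. Applying the weak test property to each generator of $M$, and using that $M^{[q]}_N$ is generated as a left $R$-module by $y^q_N$ as $y$ runs over those generators, yields the uniform inclusion $c M^{[q]}_N \subseteq \m^{[q]} M^{[q]}_N$ for every $q \geq q_0$.

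The heart of the argument is an induction showing $x \in (\m^n M)^*_N$ for every $n \geq 1$. The base case $n=1$ is the hypothesis. For the step from $n$ to $n+1$, the weak test property gives $c x^q_N \in (\m^n M)^{[q]}_N = (\m^{[q]})^n M^{[q]}_N$ for $q \geq q_0$; here I use the identity $(\m^n)^{[q]} = (\m^{[q]})^n$, which holds in prime characteristic because Frobenius is a ring homomorphism. Multiplying by $c$ once more and applying $c M^{[q]}_N \subseteq \m^{[q]} M^{[q]}_N$ to each ``inner factor'' produces $c^2 x^q_N \in (\m^{[q]})^{n+1} M^{[q]}_N = (\m^{n+1} M)^{[q]}_N$. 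Since $c^2 \in R^\circ$, this certifies $x \in (\m^{n+1} M)^*_N$, and then the weak test property \emph{recycles} $c$ in place of $c^2$, closing the induction.

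Having $c x^q_N \in (\m^{[q]})^n M^{[q]}_N \subseteq \m^n M^{[q]}_N$ for every $n$ and every $q \geq q_0$, I finish by fixing $q$ and invoking Krull's intersection theorem on the finitely generated $R$-module $M^{[q]}_N$, which sits inside the finitely generated $R$-module $F^e(N)$. This forces $c x^q_N \in \bigcap_{n \geq 1} \m^n M^{[q]}_N = 0$, hence $x \in 0^*_N$; undoing the initial reduction gives $M \subseteq L^*_N$. The main obstacle I anticipate is precisely the inductive recycling: naive iteration only delivers $c^n x^q_N \in (\m^{[q]})^n M^{[q]}_N$, which is useless for Krull because the multiplier depends on $n$. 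The weak test property is exactly what is needed to replace $c^n$ by $c$ at every stage, decoupling the multiplier from the depth of the iteration.
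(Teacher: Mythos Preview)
Your proof is correct and follows essentially the same path as the paper's: both prove by induction on $n$ that $M \subseteq (L + \m^n M)^*_N$ (you set $L=0$ first, the paper carries $L$ along and only quotients at the end), with the key ``recycling'' of $c$ from $c^2$ via the weak test property at each inductive step, and then conclude via the Krull intersection theorem. The only differences are cosmetic: you work elementwise with a fixed $x$ while the paper handles all of $M$ at once, and you reduce to $L=0$ at the outset rather than in the final step.
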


\begin{proof}
First, let $q_0$ be some power of $p$ such that there exists a $q_0$-weak test element $c$.  We show by induction on $r$ that $M \subseteq (L + m^r M)^*_N$ for all $r\geq 1$.

The case $r=1$ holds by assumption.  So assume inductively that $r>1$ and $M \subseteq (L + \m^{r-1} M)^*_N$.  From now on all bracket powers (except on $\m$) are taken as submodules of $N$.  Since $M$ is finitely generated, it follows that for all $q\geq q_0$, we have $c M^{[q]} \subseteq (L + \m^{r-1} M)^{[q]}$, so that $c^2 M^{[q]} \subseteq L^{[q]} + (\m^{[q]})^{r-1} c M^{[q]} \subseteq L^{[q]} + (\m^{[q]})^{r-1} (L +\m M)^{[q]} \subseteq L^{[q]} + (\m^{[q]})^r M^{[q]} = (L + \m^r M)^{[q]}$.  Since this holds for all $q\geq q_0$, it follows that $M \subseteq (L + \m^r M)^*_N$, completing the induction.

Now fix any power $q\geq q_0$ of $p$.  Since $M \subseteq (L + \m^r M)^*_N$ for all $r\geq 1$, we have that $c M^{[q]} \subseteq L^{[q]} + (\m^{[q]})^r M^{[q]}$ for all $r$ (for this particular $q$), so that 
$c M^{[q]}  \subseteq \bigcap_{r\geq 1} L^{[q]} + (\m^{[q]})^r M^{[q]}$.  Now going mod $L^{[q]}$ and taking bracket powers in $N/L$, we have that $c (M/L)^{[q]} \subseteq \bigcap_{r\geq 1} (\m^{[q]})^r (M/L)^{[q]} = 0$, by the Krull intersection theorem.  Now `unfix' $q$, so that we have $c M^{[q]}_N \subseteq L^{[q]}_N$ for all $q\geq q_0$, whence $M \subseteq L^*_N$.
\end{proof}

Now define \[
\vjjm NLM := \liminf_{q \ra \infty} \frac{\len(M^{[q]}_N / (L + \m M)^{[q]}_N)}{q^d}.
\]
and let $\vjjp NLM$ be the corresponding $\limsup$.

\begin{prop}
Let $(R,\m)$ be a Noetherian local ring of prime characteristic $p>0$ with a weak test element.  Let $L \subseteq M \subseteq N$ be finitely generated $R$-modules.  The following are equivalent: \begin{enumerate}
\item $M \subseteq L^*_N$.
\item $\vjjp NLM =0$.
\item $\vjjm NLM = 0$.
\end{enumerate}
\end{prop}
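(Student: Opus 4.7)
The plan is to establish the cycle (1)$\Rightarrow$(2)$\Rightarrow$(3)$\Rightarrow$(1). The substantive implication will be (3)$\Rightarrow$(1), which combines Theorem~\ref{thm:HHlen}(b) with the tight-closure Nakayama lemma (Proposition~\ref{pr:Nak}). The conceptual trick is that enlarging $L$ to $L+\m M$ converts the possibly infinite-length pair $L\subseteq M$ into the finite-colength pair $L+\m M\subseteq M$, so that the classical Hochster-Huneke machinery applies. As a preliminary, one observes that $M/(L+\m M)$ is a quotient of $M/\m M$ and hence has finite length; consequently $M^{[q]}_N/(L+\m M)^{[q]}_N$, being a quotient of $F^e(M/(L+\m M))$, also has finite length for every $q$, so the quantities $\vjjp NLM$ and $\vjjm NLM$ are well-defined nonnegative reals.

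For (1)$\Rightarrow$(2): assuming $M\subseteq L^*_N$, I note that $L+\m M\subseteq L^*_N$ (since both $L$ and $\m M\subseteq M$ lie in $L^*_N$) and that $M\subseteq L^*_N\subseteq (L+\m M)^*_N$. Applying Theorem~\ref{thm:HHlen}(a) to the finite-colength triple $L+\m M\subseteq M\subseteq N$ yields $\len(M^{[q]}_N/(L+\m M)^{[q]}_N)\leq Cq^{d-1}$ for some constant $C$ independent of $q$; dividing by $q^d$ and taking the $\limsup$ gives $\vjjp NLM=0$. The implication (2)$\Rightarrow$(3) is immediate, since $0\leq \vjjm NLM\leq \vjjp NLM$.

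The heart of the argument is (3)$\Rightarrow$(1). Writing $K:=L+\m M$, the hypothesis says exactly that $\liminf_q \len(M^{[q]}_N/K^{[q]}_N)/q^d=0$; since $\len(M/K)<\infty$, Theorem~\ref{thm:HHlen}(b) directly furnishes $M\subseteq K^*_N=(L+\m M)^*_N$. Then Proposition~\ref{pr:Nak} promotes this containment to the desired $M\subseteq L^*_N$, closing the cycle. The only real obstacle is in the application of Theorem~\ref{thm:HHlen}(b): its full hypotheses (completely stable weak test element, equidimensional completion) go slightly beyond the bare ``weak test element'' appearing in the statement, so one must either strengthen the assumption implicitly or verify that the Hochster-Huneke argument goes through in this restricted setting. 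Once that point is granted, the proof is purely formal, and the proposition may be read as an efficient repackaging of Proposition~\ref{pr:Nak}: the relative multiplicity $\vjjm NLM$ measures, up to the order $q^d$, the failure of $M$ to lie in $(L+\m M)^*_N$, and the Nakayama-type lemma translates its vanishing into the sought containment $M\subseteq L^*_N$.
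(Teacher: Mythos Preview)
Your proof is correct and follows essentially the same approach as the paper: reduce to the finite-colength pair $L+\m M\subseteq M$, invoke Theorem~\ref{thm:HHlen} in both directions, and use Proposition~\ref{pr:Nak} to bridge $(L+\m M)^*_N$ back to $L^*_N$. The paper organizes the logic as a pair of biconditionals rather than a cycle, but the content is identical.

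Your observation about the hypothesis mismatch is well taken and in fact applies equally to the paper's own proof: the proposition assumes only a weak test element, yet Theorem~\ref{thm:HHlen}(b) as stated requires a completely stable weak test element and equidimensional completion. The paper's proof simply cites Theorem~\ref{thm:HHlen} without comment, so it does not resolve this discrepancy any better than you do.
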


\begin{proof}
By Proposition~\ref{pr:Nak}, $M \subseteq L^*_N \iff M \subseteq (L + \m M)^*_N$.  By Theorem~\ref{thm:HHlen} and since $\len(M / (L + \m M)) \leq \len(M/\m M) = \mu(M)<\infty$, $M \subseteq (L + \m M)^*_N \iff \vjjp NLM =0 \iff \vjjm NLM = 0$.
\end{proof}

Here is a global version:

\begin{prop}
Let $R$ be a Noetherian ring of prime characteristic $p>0$ which is reduced, locally equidimensional, and essentially of finite type over an excellent local ring.  Let $L \subseteq M \subseteq N$ be finitely generated $R$-modules.  The following are equivalent:
\begin{enumerate}[label=(\alph*)]
\item $M \subseteq L^*_N$.
\item $M \subseteq (L + \m M)^*_N$ for all maximal ideals $\m$.
\item $M_\m \subseteq (L_\m + \m M_\m)^*_{N_\m}$ for all maximal ideals $\m$.
\item $\vjjp {N_\m} {L_\m}{M_\m} = 0$ for all maximal ideals $\m$.
\item $\vjjm {N_\m} {L_\m}{M_\m} = 0$ for all maximal ideals $\m$.
\item $M_\m \subseteq (L_\m)^*_{N_\m}$ for all maximal ideals $\m$.
\end{enumerate}
\end{prop}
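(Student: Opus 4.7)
The plan is to establish the cycle (a) $\Rightarrow$ (b) $\Rightarrow$ (c) $\iff$ (d) $\iff$ (e) $\iff$ (f) $\Rightarrow$ (a). The hypotheses on $R$---reduced, locally equidimensional, and essentially of finite type over an excellent local ring---are used exclusively to guarantee the existence of a completely stable weak test element $c \in R^\circ$ (cf.\ \cite{HHbase}). This single element simultaneously powers persistence of tight closure under localization and the local-to-global gluing in (f) $\Rightarrow$ (a).

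First I would dispose of the easy implications. The containment $L \subseteq L + \m M$ gives (a) $\Rightarrow$ (b) at once. For (b) $\Rightarrow$ (c), I localize the tight-closure membership: if $c \cdot x^q_N \in (L + \m M)^{[q]}_N$ for all $q \geq q_0$, then passing to $R_\m$---and noting that the image of $c$ lies in $(R_\m)^\circ$ because $R_\m$ is a localization of a reduced equidimensional ring---yields $c \cdot (x_\m)^q_{N_\m} \in (L_\m + \m M_\m)^{[q]}_{N_\m}$, so that $x_\m \in (L_\m + \m M_\m)^*_{N_\m}$.

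The cluster (c) $\iff$ (d) $\iff$ (e) $\iff$ (f) follows by applying the preceding local Proposition and the tight-closure Nakayama lemma (Proposition~\ref{pr:Nak}) inside the local ring $R_\m$, with the image of $c$ serving as the local weak test element. Specifically, the preceding Proposition, applied to the inclusions $L_\m \subseteq M_\m \subseteq N_\m$ over $R_\m$, gives (d) $\iff$ (e) $\iff$ (f), while Proposition~\ref{pr:Nak} applied in $R_\m$ gives (c) $\iff$ (f).

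The substantive step is (f) $\Rightarrow$ (a). Fix $x \in M$; by (f), for every maximal $\m$, $x_\m \in (L_\m)^*_{N_\m}$. Because $c$ is \emph{completely stable}, a single exponent $q_0$ suffices across all $R_\m$ simultaneously: for every maximal $\m$ and every $q \geq q_0$, the element $c \cdot (x^q_N)_\m = c \cdot (x_\m)^q_{N_\m}$ lies in $(L_\m)^{[q]}_{N_\m} = (L^{[q]}_N)_\m$. As $N/L$ is finitely generated, so is $F^e(N)/L^{[q]}_N$; an element of a finitely generated module that vanishes at every maximal localization is already zero, so $c x^q_N \in L^{[q]}_N$ globally for all $q \geq q_0$, which is exactly $x \in L^*_N$. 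The main obstacle is precisely the need for a \emph{uniform} $q_0$ across all maximal localizations; this is what the word ``completely'' in ``completely stable weak test element'' delivers, and without it one could not splice the local tight-closure certificates into a single global one.
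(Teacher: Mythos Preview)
Your argument is correct and follows essentially the same route as the paper's: the tight-closure Nakayama lemma links (c) and (f), Theorem~\ref{thm:HHlen} handles the numerical equivalences (d), (e), and a locally (or completely) stable weak test element with a uniform $q_0$ glues the local memberships (f) into the global one (a). The paper organizes the middle block as (c) $\iff$ (d) $\iff$ (e) directly via Theorem~\ref{thm:HHlen} and then (c) $\Rightarrow$ (f) via Nakayama, whereas you package (d) $\iff$ (e) $\iff$ (f) via the preceding local Proposition and (c) $\iff$ (f) via Nakayama; these are the same ingredients rearranged.

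One correction: your claim that the hypotheses on $R$ are used \emph{exclusively} to produce the test element is not quite right. The paper explicitly uses excellence, reducedness, and local equidimensionality to ensure that each $\widehat{R_\m}$ is equidimensional and reduced, which is precisely the hypothesis Theorem~\ref{thm:HHlen}(b) needs for the direction $\vjjm{N_\m}{L_\m}{M_\m}=0 \Rightarrow M_\m \subseteq (L_\m + \m M_\m)^*_{N_\m}$. Your appeal to the preceding local Proposition hides this, but that Proposition's proof invokes Theorem~\ref{thm:HHlen}(b) and so tacitly needs the same condition on $\widehat{R_\m}$.
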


\begin{proof}
(a) $\implies$ (b) $\implies$ (c): Clear.

(c) $\iff$ (d) $\iff$ (e): Since $R$ is excellent, each $\widehat{R_\m}$ is still equidimensional and reduced. Then the equivalence follows from Theorem~\ref{thm:HHlen} applied to each $R_\m$.

(c) $\implies$ (f), by Proposition~\ref{pr:Nak}, since $R$ has a locally stable weak test element by \cite[Theorem 6.1]{HHbase}.

(f) $\implies$ (a): Let $c\in R^\circ$ be a locally stable $q_0$-weak test element.  Fix $q \geq q_0$. Then for all maximal ideals $\m$, we have \[
(c M^{[q]}_N)_\m = \frac{c}{1} \cdot (M_\m)^{[q]}_{N_\m} \subseteq (L_\m)^{[q]}_{N_\m}
=(L^{[q]}_N)_\m.
\]
Since containment is a local property, it follows that $c M^{[q]}_N \subseteq L^{[q]}_N$.  Since this holds for all $q\geq q_0$, it follows that $M \subseteq L^*_N$.
\end{proof}

\section{\texorpdfstring{Another numerical characterization of tight closure when $\len(I/J)<\infty$}
{Another numerical characterization of tight closure}}\label{sec:jHK}

Inspired by j-multiplicity, we make the following definitions:

\begin{defn}
For an ideal $K$ of $R$ and an integer $e \geq -1$, (using the convention $K^{[p^{-1}]} := R$) we set \[
l_e(K) := \len(\Gamma_\m(K^{[p^e]} / K^{[p^{e+1}]}))
\]
and \[
f_e(K) := \sum_{n=-1}^{e-1} l_n(K).
\]
For a pair of ideals $J \subseteq I$, set \[
\qjjm(J, I) := \liminf_{q \ra \infty} \frac{f_e(J) - f_e(I)}{q^d},
\]
and \[
\qjjp(J, I) := \limsup_{q \ra \infty} \frac{f_e(J) - f_e(I)}{q^d}.
\]
If these two quantities are equal, we denote the common number by $\qjj(J, I)$.
\end{defn}

We have the following:

\begin{thm}\label{thm:jHKcrit}
Let $(R,\m)$ be a Noetherian local ring of dimension $d$ and prime characteristic $p>0$, and let $J$, $I$ be ideals such that $J \subseteq I$ and $\len(I/J)<\infty$.  Consider the following conditions: \begin{enumerate}[label=(\alph*)]
\item $I^*=J^*$.
\item $\displaystyle \qjj(J, I) = 0.$
\item $\displaystyle \qjjm(J, I) \leq 0.$
\end{enumerate}
Then (a) $\implies$ (b) $\implies$ (c).  If moreover $R$ has a completely stable weak test element and $\hat{R}$ is equidimensional and reduced, or if $\dim R=0$, then (c) $\implies$ (a) as well.
\end{thm}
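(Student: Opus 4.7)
The plan is to deduce both directions from a single two-sided identity comparing $l_n(J)$ and $l_n(I)$. Set $q := p^n$. Because $\len(I/J)<\infty$, the modules $I^{[q]}/J^{[q]}$ and $I^{[pq]}/J^{[pq]}$ both have finite length. Applying $\Gamma_\m$ to the two short exact sequences
\[
0 \to J^{[q]}/J^{[pq]} \to I^{[q]}/J^{[pq]} \to I^{[q]}/J^{[q]} \to 0 \qquad (\ast),
\]
\[
0 \to I^{[pq]}/J^{[pq]} \to I^{[q]}/J^{[pq]} \to I^{[q]}/I^{[pq]} \to 0 \qquad (\ast\ast),
\]
and observing that the finite-length quotient in $(\ast)$ and the finite-length kernel in $(\ast\ast)$ make $\Gamma_\m$ act as the identity on them while killing the $H^1_\m$ obstructions, I obtain two expressions for $\len\Gamma_\m(I^{[q]}/J^{[pq]})$. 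Equating them yields
\[
l_n(J) - l_n(I) = \len\bigl(I^{[pq]}/J^{[pq]}\bigr) - a_n, \qquad 0 \leq a_n \leq \len\bigl(I^{[q]}/J^{[q]}\bigr),
\]
where $a_n$ is the length of the image of $\Gamma_\m(I^{[q]}/J^{[pq]})$ in $I^{[q]}/J^{[q]}$. The base case $n=-1$ is immediate from $0 \to I/J \to R/J \to R/I \to 0$: since $I/J$ has finite length, $l_{-1}(J) - l_{-1}(I) = \len(I/J)$.

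\textbf{(a) $\Rightarrow$ (b).} Assuming $I \subseteq J^*$, Theorem~\ref{thm:HHlen}(a) gives $\len(I^{[q]}/J^{[q]}) \leq Cq^{d-1}$, so $|l_n(J) - l_n(I)| \leq C(pq)^{d-1}$. Summing and dividing by $q^d = p^{ed}$, the error is at most a constant multiple of $\sum_{n=-1}^{e-1} p^{(n+1)(d-1)-ed}$, which tends to $0$ (for $d \geq 2$ by a geometric sum dominated by the last term; for $d=1$ because a linear factor is beaten by exponential decay; and for $d=0$ trivially, since in an Artinian local ring $\m$ is nilpotent and $I^{[q]}=J^{[q]}=0$ for $q \gg 0$). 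The implication (b) $\Rightarrow$ (c) is immediate from the definitions.

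\textbf{(c) $\Rightarrow$ (a).} Reading the identity as the inequality $l_n(J) - l_n(I) \geq \len(I^{[pq]}/J^{[pq]}) - \len(I^{[q]}/J^{[q]})$ and telescoping over $n = 0, \ldots, e-1$, then adding the base term, gives the clean lower bound
\[
f_e(J) - f_e(I) \;\geq\; \len\bigl(I^{[q]}/J^{[q]}\bigr) \;\geq\; 0.
\]
Thus $\qjjm(J,I) \geq \liminf_q \len(I^{[q]}/J^{[q]})/q^d \geq 0$. If $\qjjm(J,I) \leq 0$, this liminf vanishes, and Theorem~\ref{thm:HHlen}(b), valid under the stated hypotheses (completely stable weak test element and equidimensional completion), forces $I \subseteq J^*$, so $I^* = J^*$. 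In the $\dim R = 0$ case, $f_e(K) = \len(R/K^{[q]})$ because everything is $\m$-torsion, so $f_e(J)-f_e(I) = \len(I^{[q]}/J^{[q]})$ exactly; condition (c) then forces $I^{[q_0]} = J^{[q_0]}$ for some $q_0$, placing $I$ in the Frobenius closure of $J$ and hence in $J^*$.

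\textbf{Main obstacle.} The conceptual step that makes everything work is recognizing that finite length of $I^{[q]}/J^{[q]}$ and $I^{[pq]}/J^{[pq]}$ kills the $H^1_\m$ contributions in the two long exact sequences, yielding a genuine \emph{equation} (rather than just an inequality) for $\len\Gamma_\m(I^{[q]}/J^{[pq]})$ from both sides simultaneously. Once that is in hand, both directions reduce to two-sided estimates on the same quantity, with Theorem~\ref{thm:HHlen} supplying the remaining analytic content.
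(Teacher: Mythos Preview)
Your proof is correct and follows essentially the same route as the paper: both arguments apply $\Gamma_\m$ to the two short exact sequences with middle term $I^{[q]}/J^{[pq]}$, extract the two-sided bound $\len(I^{[pq]}/J^{[pq]}) - \len(I^{[q]}/J^{[q]}) \leq l_n(J)-l_n(I) \leq \len(I^{[pq]}/J^{[pq]})$ (your version packages this via the explicit defect $a_n$, which is equivalent), telescope to get $\len(I^{[q]}/J^{[q]}) \leq f_e(J)-f_e(I) \leq \sum_{j\leq e}\len(I^{[p^j]}/J^{[p^j]})$, and then invoke Theorem~\ref{thm:HHlen} for each direction. One wording quibble: in sequence $(\ast)$ the relevant $H^1_\m$ term (of the kernel $J^{[q]}/J^{[pq]}$) is \emph{not} killed, which is precisely why you need the slack term $a_n$; your computation handles this correctly, but the phrase ``killing the $H^1_\m$ obstructions'' overstates what happens there.
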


\begin{proof}
We dispense first with the case where $\dim R=0$.  In this case, for any proper ideal $K$ we have $K^{[q]} = 0$ for $q\gg 0$, and hence $f_n(K) = \len(R)>0$ for $n \gg 0$, whereas $f_n(R) = 0$ for all $n$.  Also, $I^*=J^*$ if and only if both ideals are proper or both are the unit ideal.  If both ideals are proper, then $f_n(I) = \len(R) = f_n(J)$ for $n\gg 0$, whence (b) holds.  If both ideals are improper, then $I=J=R$, so that (b) holds.  So we see that (a) $\implies$ (b).  Conversely, suppose that (c) holds.  If $I=R$ then $f_n(I)=0$, whence $f_n(J) =0$ for infinitely many values of $n$, which forces $J=R$.  On the other hand, if $I$ is proper, then $J$ is proper since $J \subseteq I$.  In either case, (a) holds, so (c) $\implies$ (a).

From now on, we assume that $d = \dim R>0$.  First note that we have the following short exact sequences for all $q=p^e$, $e\geq -1$: \begin{equation}\label{seq:1q}
0 \ra I^{[pq]}/J^{[pq]} \ra I^{[q]}/J^{[pq]} \ra I^{[q]} / I^{[pq]} \ra 0
\end{equation}
and \begin{equation}\label{seq:2q}
0 \ra J^{[q]} / J^{[pq]} \ra I^{[q]} / J^{[pq]} \ra I^{[q]} / J^{[q]} \ra 0.
\end{equation}
Applying $\Gamma_\m$ to sequence (\ref{seq:1q}), and using the fact that $I^{[pq]} / J^{[pq]}$ has finite length, we get the short exact sequence: \begin{equation}\label{seq:1q'}
0 \ra I^{[pq]}/J^{[pq]} \ra \Gamma_\m(I^{[q]}/J^{[pq]}) \ra \Gamma_\m(I^{[q]} / I^{[pq]}) \ra 0.
\end{equation}
Hence, \begin{equation}\label{eq:ij}
\len(\Gamma_\m(I^{[q]}/J^{[pq]})) = \len(I^{[pq]}/J^{[pq]}) + l_e(I).
\end{equation}
Now, applying $\Gamma_\m$ to the sequence (\ref{seq:2q}) and using the fact that $I^{[q]}/J^{[q]}$ has finite length, we get the following exact sequence: \begin{equation}\label{seq:2q'}
0 \ra \Gamma_\m(J^{[q]}/J^{[pq]}) \ra \Gamma_\m(I^{[q]}/J^{[pq]}) \ra I^{[q]} / J^{[q]},
\end{equation}
which leads to the inequalities: \begin{equation}\label{ineq:ij}
l_e(J) \leq \len(\Gamma_\m(I^{[q]}/J^{[pq]})) \leq l_e(J) + \len(I^{[q]}/J^{[q]}).
\end{equation}
Combining Equation~\ref{eq:ij} with Inequalities~\ref{ineq:ij}, we get: 
\[
l_e(J) \leq \len(I^{[pq]} / J^{[pq]}) + l_e(I) \leq l_e(J) + \len(I^{[q]}/J^{[q]}),
\]
which are equivalent to the following: \begin{equation}\label{ineq:q}
\len(I^{[pq]}/J^{[pq]}) - \len(I^{[q]} /J^{[q]}) \leq l_e(J)-l_e(I) \leq \len(I^{[pq]}/J^{[pq]}).
\end{equation}
Taking the sum of Inequalities~\ref{ineq:q} from $e=-1$ to $n-1$, we get: \begin{equation}\label{ineq:total}
\len(I^{[p^n]}/J^{[p^n]}) \leq f_n(J) - f_n(I) \leq \sum_{j=0}^n \len(I^{[p^j]}/J^{[p^j]}).
\end{equation}

Following these preliminaries, we proceed to the implications in the proof.  It is obvious that (b) implies (c).  So suppose that (a) is true.  Then by Theorem~\ref{thm:HHlen}, there is a constant $C$ such that $\len(I^{[q]}/J^{[q]}) \leq C q^{d-1}$ for all $q=p^e$, $e\geq -1$.  Thus, \[
\sum_{j=0}^n \len(I^{[p^j]}/J^{[p^j]}) \leq C \cdot \sum_{j=0}^n p^{j(d-1)} = \frac{C (p^{(n+1)(d-1)}-1)}{p^{d-1}-1} < C'p^{n(d-1)},
\]
where $C':=Cp^{d-1}/(p^{d-1}-1)$. Combining with Inequalities~\ref{ineq:total} and dividing by $p^{nd}$, we get \[
0 \leq \frac{\len(I^{[p^n]} / J^{[p^n]})}{p^{nd}} \leq \frac{f_n(J) - f_n(I)}{p^{nd}} < \frac{C'p^{nd-n}}{p^{nd}} = \frac{C'}{p^n}.
\]
Since both the leftmost and rightmost terms clearly have a limit of 0 as $n \rightarrow \infty$, statement (b) follows.

Conversely, suppose $R$ satisfies the additional specified conditions and that (c) holds. Using (c) and Inequalities~\ref{ineq:total}, we have: \[
0 \leq \liminf_{n\ra \infty} \frac{\len(I^{[p^n]}/J^{[p^n]})}{p^{nd}} \leq\liminf_{n\ra \infty} \frac{f_n(J)-f_n(I)}{p^{nd}} \leq 0
\]

Thus, $\displaystyle \liminf_{n\ra \infty} \frac{\len(I^{[p^n]}/J^{[p^n]})}{p^{nd}} =0$, so by Theorem~\ref{thm:HHlen}, $I^*=J^*$.
\end{proof}

We also get a global version:

\begin{thm}\label{thm:jHKglobal}
Let $R$ be a Noetherian ring of prime characteristic $p>0$ which is F-regular on the punctured spectrum, and let $J \subseteq I$ be ideals.  Consider the following conditions: \begin{enumerate}[label=(\alph*)]
\item $(I_\m)^* = (J_\m)^*$ for all maximal ideals $\m$.
\item $\qjj(J_\p, I_\p) = 0$ for all $\p \in \Spec R$.
\item $\qjjm(J_\p, I_\p) \leq 0$ for all $\p \in \Spec R$.
\end{enumerate}
Then (a) $\implies$ (b) $\implies$ (c).  If moreover $R$ has a completely stable weak test element and $\widehat{R_\p}$ is equidimensional and reduced for all $\p \in \Spec R$, or if $\dim R=0$, then (c) $\implies$ (a) as well.
\end{thm}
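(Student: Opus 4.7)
The plan is to reduce Theorem~\ref{thm:jHKglobal} to the local version, Theorem~\ref{thm:jHKcrit}, by localizing at each prime of $R$. The pivotal intermediate observation is that, under either (a) or (c) together with F-regularity on the punctured spectrum, every prime in $\Supp(I/J)$ must be maximal. This has two consequences: it collapses $\qjj$ to zero trivially at non-maximal primes, and it forces $\len_{R_\m}(I_\m/J_\m)<\infty$ at every maximal $\m$, which is exactly the hypothesis of Theorem~\ref{thm:jHKcrit}.

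For (a) $\implies$ (b), I first show $I_\q=J_\q$ at every non-maximal prime $\q$. Fix such a $\q$ and a maximal $\m\supseteq \q$. Persistence of tight closure under the localization $R_\m\to R_\q$ (available because a weak test element $c\in R_\m^\circ$, which the excellent setting provides, has image in $R_\q^\circ$ since it avoids all minimal primes contained in $\q$) gives $((I_\m)^*)_\q \subseteq (I_\q)^* = I_\q$, the last equality by F-regularity of $R_\q$. As also $((I_\m)^*)_\q \supseteq I_\q$, equality holds, and analogously $((J_\m)^*)_\q = J_\q$; hypothesis (a) then forces $I_\q = J_\q$. Hence $\Supp(I/J)$ consists of finitely many maximal ideals, and at each $\m\in\Supp(I/J)$ we may apply Theorem~\ref{thm:jHKcrit} in $R_\m$ using $\len_{R_\m}(I_\m/J_\m)<\infty$ and $(I_\m)^*=(J_\m)^*$ to obtain $\qjj(J_\m,I_\m)=0$; at every other $\p$, $I_\p = J_\p$ makes $\qjj(J_\p,I_\p)=0$ trivial. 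The implication (b) $\implies$ (c) is immediate from the definitions.

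For (c) $\implies$ (a) under the added hypotheses, I again aim to confine $\Supp(I/J)$ to maximal ideals, this time by contradiction. Let $\q$ be a minimal prime of $I/J$; then $\len_{R_\q}(I_\q/J_\q)<\infty$. A completely stable weak test element for $R$ remains one in $R_\q$, and $\widehat{R_\q}$ is equidimensional and reduced by hypothesis, so Theorem~\ref{thm:jHKcrit} applies in $R_\q$ (the zero-dimensional case is covered directly in that theorem). Applying it to $\qjjm(J_\q,I_\q)\leq 0$ from (c) yields $(I_\q)^*=(J_\q)^*$. If $\q$ were non-maximal, F-regularity of $R_\q$ would collapse both sides, giving $I_\q=J_\q$ and contradicting $\q\in\Supp(I/J)$. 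Hence every minimal prime of $I/J$, and therefore every prime in $\Supp(I/J)$, is maximal. So at each maximal $\m$ we have $\len_{R_\m}(I_\m/J_\m)<\infty$, and a final application of Theorem~\ref{thm:jHKcrit} at $R_\m$ using (c) gives $(I_\m)^*=(J_\m)^*$, which is (a).

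The main obstacle is the persistence-of-tight-closure step in (a) $\implies$ (b): without it, hypothesis (a) at maximal primes does not descend to equality of ideals at non-maximal primes. This is where a weak test element (available from the paper's excellence assumptions) does the work, while F-regularity on the punctured spectrum upgrades the resulting inclusion into the required equality.
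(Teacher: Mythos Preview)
Your argument is correct and matches the paper's proof closely: both directions reduce to Theorem~\ref{thm:jHKcrit} after establishing that $I_\p=J_\p$ at every non-maximal prime. Your use of minimal primes of $I/J$ in the (c)~$\Rightarrow$~(a) direction is logically equivalent to the paper's Noetherian induction on primes.

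One correction is worth making, since you flag it as ``the main obstacle.'' Persistence of tight closure under localization --- the inclusion $((J_\m)^*)_\q \subseteq (J_\q)^*$ --- does \emph{not} require a weak test element or any excellence hypothesis. If $x\in (J_\m)^*$, then some $c\in R_\m^\circ$ (depending on $x$, not a test element) satisfies $c x^q\in (J_\m)^{[q]}$ for $q\gg 0$; since every minimal prime of $R_\q$ is the localization of a minimal prime of $R_\m$ contained in $\q$, the image of $c$ lies in $R_\q^\circ$, and the inclusion follows. This matters because the hypotheses for (a)~$\Rightarrow$~(b) in Theorem~\ref{thm:jHKglobal} do not include excellence or a test element; the paper's proof uses exactly this automatic persistence without comment. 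Your parenthetical appeal to ``the excellent setting'' is therefore both unnecessary and, strictly speaking, unjustified for that implication.
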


\begin{proof}
First we show that (a) $\implies$ (b): First, suppose $\p$ is non-maximal, and choose a maximal ideal $\m$ such that $\p \subseteq \m$.  Then $J_\p \subseteq I_\p = (I_\m)_\p \subseteq ((J_\m)^*)_\p \subseteq ((J_\m)_\p)^* = J_\p$.  That is, $J_\p = I_\p$ for all non-maximal ideals $\p$, so it follows that $I/J$ has finite length and that (b) holds for non-maximal $\p$.  However, for any maximal ideal $\m$, since $I_\m / J_\m$ has finite length, the implication follows for maximal ideals by Theorem~\ref{thm:jHKcrit}.

Next, we show that (c) $\implies$ (a) under the stated conditions.  We first show that $I_\p = J_\p$ for all non-maximal ideals $\p$.  By Noetherian induction, we may assume that $I_\q = J_\q$ for all prime ideals $\q \subsetneq \p$.  Thus, $\len(I_\p/J_\p) <\infty$, so that Theorem~\ref{thm:jHKcrit} applied to the ideals $J_\p \subseteq I_\p$ shows that $(J_\p)^* = (I_\p)^*$.  But $R_\p$ is F-regular, so $J_\p = I_\p$.

We have now that $\len(I/J) < \infty$, so that for any maximal ideal $\m$, Theorem~\ref{thm:jHKcrit}  shows that $(J_\m)^* = (I_\m)^*$.
\end{proof}

\section*{Acknowlegements}
The authors benefitted from conversations with many people in preparing this manuscript.  In particular, we wish to thank Ezra Miller and Kirsten Schmitz for discussions regarding Section~\ref{sec:gb}.  We used Macaulay 2 \cite{M2hyper} for some computations in that section.

\providecommand{\bysame}{\leavevmode\hbox to3em{\hrulefill}\thinspace}
\providecommand{\MR}{\relax\ifhmode\unskip\space\fi MR }
% \MRhref is called by the amsart/book/proc definition of \MR.
\providecommand{\MRhref}[2]{%
  \href{http://www.ams.org/mathscinet-getitem?mr=#1}{#2}
}
\providecommand{\href}[2]{#2}

\end{document}